\newtheorem{theorem}{Theorem}[section]
\newtheorem{corollary}[theorem]{Corollary}
\newtheorem{proposition}[theorem]{Proposition}
\newcommand{\fl}[2]{
\xymatrix@C15pt{#1\ar[r]&#2}}
\newcommand{\flcourte}[2]{
\xymatrix@C12pt{#1\ar[r]&#2}}
\theoremstyle{definition}}
\theoremstyle{definition}\newtheorem{example}[theorem]{Example}}
\theoremstyle{definition}
\newtheorem{definition}[theorem]{Definition}
\newtheorem{question}[theorem]{Question}
\newtheorem{claim}[theorem]{Claim}
\theoremstyle{definition}\newtheorem{remark}[theorem]{Remark}}
\def\T{\ensuremath{\mathbb T}}
\def\R{\ensuremath{\mathbb R}}
\def\Z{\ensuremath{\mathbb Z}}
\def\C{\ensuremath{\mathbb C}}
\def\Q{\ensuremath{\mathbb Q}}
\def\N{\ensuremath{\mathbb N}}
\newcommand{\wh}[1]{\widehat{#1}}
\newcommand{\ds}{\displaystyle}
\newcommand{\ba}[1]{\overline{#1}}
\newcommand{\ti}[1]{\widetilde{#1}}
\newcommand{\ka}{Kazhdan}
\newcommand{\qq}{Q}
\newcommand{\nq}{n\in\qq}
\newcommand{\nk}{n_{k}}
\newcommand{\nkp}[1]{(n_{k})_{k\ge #1}}
\newcommand{\mk}{m_{k}}
\newcommand{\mc}{\wh{\mu }}
\newcommand{\dpi}[1]{\delta _{\{#1\}}}
\newcommand{\ul}{\liminf}
\DeclareMathOperator{\Kc}{Kaz}
\newcommand{\Kct}{\widetilde{\Kc}}
\author{Catalin Badea}
\address{Laboratoire Paul Painlev\'e, UMR 8524\\
Universit\'{e} de Lille\\
Cit\'e Scientifique, B\^atiment M2\\
59655 Villeneuve d'Ascq Cedex\\
France}
\email{catalin.badea@univ-lille.fr}
\author{Sophie Grivaux}
\address{CNRS, Laboratoire Paul Painlev\'e, UMR 8524\\
Universit\'{e} de Lille\\
Cit\'e Scientifique, B\^atiment M2\\
59655 Villeneuve d'Ascq Cedex\\
France}
\email{sophie.grivaux@univ-lille.fr}
\title[Continuous probability measures with large Fourier coefficients]{Kazhdan constants, continuous probability measures with large Fourier coefficients and rigidity sequences}
\thanks{This work was supported in part by
 the project FRONT of the French
National Research Agency (grant ANR-17-CE40-0021) and by the Labex CEMPI (ANR-11-LABX-0007-01).
}
\thanks{We are grateful to \'Etienne Matheron for pointing out a simplification of our original proof of Corollary \ref{Cor2}, and to \'Etienne Matheron, Martine Queff\'elec, Jean-Paul Thouvenot and Benjy Weiss for several interesting discussions.}
\begin{document}
\begin{abstract}
Exploiting a construction of rigidity sequences for weakly mixing dynamical systems by Fayad and Thouvenot, we show that for every integers $p_{1},\dots,p_{r}$ there exists a continuous probability measure $\mu $ on the unit circle $\T$ such that 
\[
\inf_{k_{1}\ge 0,\dots,k_{r}\ge 0}|\wh{\mu }(p_{1}^{k_{1}}\dots p_{r}^{k_{r}})|>0.
\]
This results applies in particular to the Furstenberg set $F=\{2^{k}3^{k'}\,;\,k\ge 0,\ k'\ge 0\}$, and disproves a 1988 conjecture of Lyons inspired by Furstenberg's famous $\times 2$-$\times 3$ conjecture. We also estimate the modified Kazhdan constant of $F$ and obtain general results on rigidity sequences which allow us to retrieve essentially all known examples of such sequences.
\end{abstract}

\dedicatory{
\bigskip
To the memory of Jean-Pierre Kahane (1926-2017)
\bigskip\bigskip\bigskip}

\subjclass{43A25, 37A05, 37A25}
\keywords{Fourier coefficients of continuous measures; non-lacunary semigroups of integers; Furstenberg Conjecture; rigidity sequences for weakly mixing dynamical systems; Kazhdan subsets of $\mathbb{Z}$}
\maketitle

\section{Introduction}\label{Intro}
Denote by $\T$ the unit circle $\T=\{\lambda \in\C\,;\,|\lambda |=1\}$, by $\mathcal{M}(\T)$ the set of (finite) complex Borel measures on $\T$ and by $\mathcal{P}(\T)$ the set of Borel probability measures on $\T$. The Fourier coefficients of $\mu\in \mathcal{M}(\T)$ are defined here as 
$$\hat{\mu}(n) = \int_{\T} \lambda ^n \, d\mu(\lambda) .$$
A measure $\mu \in\mathcal{P}(\T)$ is said to be \emph{continuous}, or \emph{atomless}, if $\mu (\{\lambda \})=0$ for every $\lambda \in\T$.  We denote the set of continuous probability measures on $\T$ by $\mathcal{P}_{c}(\T)$. According to a theorem of Wiener and the Koopman-von Neumann lemma, $\mu$ is continuous if and only if $\hat{\mu}(n)$ tends to zero as $n$ tends to infinity along a sequence in $\N$ of density one. For every $\mu \in\mathcal{P}(\T)$, we define $\ti{\mu }$ by setting $\ti{\mu }(A)=\mu (A^{c})$ for every Borel set $A\subseteq \T$, with $A^{c}=\{\ba{\lambda }\,;\,\lambda \in A\}$. Then $\nu:=\mu *\ti{\mu }$ has the property that $\hat{\nu}(n)=|\hat{\mu}(n)|^2\ge 0$ for every $n\in\Z$, and $\nu$ belongs to $\mathcal{P}_{c}(\T)$ as soon as $\mu$ does.

\subsection*{A conjecture of Russell Lyons} 
Our aim in this paper is to study some non-lacunary sets of positive integers from a Fourier analysis point of view, and to construct some probability measures which have large Fourier coefficients on such sets. In particular, we disprove a 1988 conjecture of Lyons \cite{L}, called there Conjecture (C4), which reads as follows:
\par\medskip
\begin{flushright}
 \begin{minipage}{14cm}Lyons' Conjecture (C4): {\emph {If $S$ is a non-lacunary semigroup of integers, and if $\mu \in\mathcal{P}_{c}(\T)$, there exists an infinite sequence $(n_{k})_{k\ge 1}$ of elements of $S$ such that ${\wh{\mu }(n_{k})}\rightarrow{0}$ as ${k}\rightarrow{+\infty}$.}}
 \end{minipage}
\end{flushright}
\par\medskip
This conjecture of Lyons is inspired by Furstenberg's famous conjecture concerning simultaneously invariant probability measures for two commuting automorphisms of the unit circle $\T$,
${T_{p}:\lambda \longmapsto\lambda ^{p}}$ and ${T_{q}:\lambda \longmapsto\lambda ^{q}}$,  when $p$ and $q$ are two multiplicatively independent integers (i.e.\ $p$ and $q$ are not both powers of the same integer). In this setting, Furstenberg's conjecture states that the only continuous probability measure on $\T$ invariant by both $T_{p}$ and $T_{q}$ is the Lebesgue measure on $\T$. Furstenberg himself proved \cite{F} that if $S$ is any non-lacunary semigroup of integers (i.e.\ if $S$ is not contained in any semigroup of the form $\{a^{n}\,;\,n\ge 0\}$, $a\ge 2$), then the only infinite closed $S$-invariant subset of $\T$ is $\T$ itself. See \cite{Bos} for an elementary proof of this result and the references mentioned in \cite[Chapter 2]{Bug} for several extensions. Since $S=\{p^{k}q^{k'}\,;\,k,\,k'\ge 0\}$ is a non-lacunary semigroup whenever $p$ and $q$ are multiplicatively independent, the only infinite closed subset of $\T$ which is simultaneously $T_{p}$-invariant and $T_{q}$-invariant is $\T$. Starting with the work of Lyons in \cite{L}, Furstenberg's conjecture has given rise to an impressive amount of related questions and results, concerning in particular the dynamics of commuting group automorphisms. We refer the reader to the papers \cite{Ru}, \cite{BLMV}, \cite{EF} or \cite{Hoch-autre} for example, as well as to the texts \cite{Lind1}, \cite{Hoch} or \cite{Ve} for  surveys of results related to this conjecture, as well as for perspectives.
\par\smallskip
As written in \cite{L}, conjecture (C4) is a natural version of Furstenberg's conjecture about measures, but not involving invariance. If (C4) were true, it would imply an affirmative answer to the Furstenberg conjecture (if $\mu\in\mathcal{P}_c(\T)$ is $T_{p}$- and $T_{q}$-invariant, applying (C4) to each of the measures $\mu_j:=T_j(\mu)$, $j\in\Z\setminus\{0\}$, yields that $\hat{\mu}(j)=0$ for every $j\in\Z\setminus\{0\}$). 

\subsection*{Kazhdan sets and modified \ka\ constants} It turns out that Lyons' conjecture is related to an important property of subsets of $\Z$, namely that of being or not a \emph{\ka\ subset} of $\Z$.  
 \ka\ subsets $Q$ of a second-countable topological group $G$ are those for which there exists $\varepsilon >0$ such that any strongly continuous representation $\pi $ of $G$ on a complex separable Hilbert space $H$ admitting  a vector $x\in H$ with $||x||=1$ which is $\varepsilon $-invariant on $Q$ (i.e.\ $\sup_{g\in Q}||\pi (g)x-x||<\varepsilon $) has a $G$-invariant vector. Such an $\varepsilon $ is called a \emph{\ka\ constant for} $Q$, and the supremum of all $\varepsilon $'s with this property is {the \ka\ constant of} $Q$. 
Groups with Property (T), also called \ka\ groups, are those which admit a compact \ka\ set. See the book \cite{BdHV} for more on Property (T) and its numerous important applications. 
\par\smallskip
As suggested in \cite[Sec. 7.12]{BdHV}, it is of interest to study \ka\ sets in groups which do not have Property (T), such as locally compact abelian groups, Heisenberg groups, etc. See \cite{BG1} and also \cite{Ch} for a study of such problems. In the case of the group $\Z$, the definition above is equivalent to the following one:

\begin{definition}(Kazhdan sets and constants)
A subset $Q\subset \Z$ is said to be a \emph{Kazhdan set} if there exists $\varepsilon >0$ such that any unitary operator $U$ acting on a complex separable Hilbert space $H$ satisfies the following property: if there exists a vector $x\in H$ with $||x||=1$ such that $\sup_{\nq}||U^{n}x-x||<\varepsilon $, then there exists a non-zero vector $y\in H$ such that $Uy=y$ (i.e.\ $1$ is an eigenvalue of $U$). We will say in this case that $(Q,\varepsilon)$ is a Kazhdan pair. We define the Kazhdan constant of $Q$ as
$$ \Kc(Q) = \inf_{U}\inf_{\|x\|=1}\sup_{q\in Q} \|U^qx-x\|,$$
where the first infimum is taken over all unitary operators $U$ on $H$ without fixed vectors.
\end{definition}
It follows from  \cite[p. 30]{BdHV} that $0 \le \Kc(Q) \le \sqrt{2}$ for every $Q\subseteq \Z$.
\par\smallskip
Several characterizations of \ka\ subsets of $\Z$ were obtained in \cite{BG1} as consequences of results applying to a much wider class of groups; self-contained proofs of these characterizations of \ka\ subsets of $\Z$, involving only classical tools from harmonic analysis, were obtained in the paper \cite{BG2}. 
One of the characterizations of generating \ka\ sets obtained in \cite[Th.\,6.1]{BG1} (see also \cite[Th.\,4.12]{BG2}) runs as follows. Recall that $Q$ is said to be \emph{generating} in the group $\Z$ if the smallest subgroup containing $Q$ is $\Z$ itself.

\begin{theorem}[\cite{BG1}]\label{Th BB}
Let $Q$ be a generating subset of $\Z$. Then $Q$ is a \ka\ subset of $\Z$ if and only if there exists $\varepsilon' \in(0,\sqrt{2}]$ such that $(Q,\varepsilon' )$ is a \emph{modified \ka\ pair}, that is any unitary operator $V$ acting on a complex separable Hilbert space $H$ satisfies the following property: if there exists a vector $x\in H$ with $||x||=1$ such that $\sup_{\nq}||V^{n}x-x||<\varepsilon' $, then $V$ has at least one eigenvalue.
\end{theorem}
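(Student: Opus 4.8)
The statement is an "if and only if" characterizing generating Kazhdan subsets of $\Z$ via the weaker "modified" condition (eigenvalue instead of eigenvector for $1$). The plan is to prove both implications by playing off the two natural operations available on unitaries: passing from a general unitary $U$ to the one with the fixed part stripped off, and passing from $U$ to some power $U^m$ to move a prescribed eigenvalue to $1$. Throughout I will use the spectral-theoretic reformulation: by the spectral theorem, for a unitary $U$ with scalar spectral measures supported on $\T$, the quantity $\|U^n x - x\|^2$ equals $\int_\T |\lambda^n - 1|^2 \, d\mu_x(\lambda)$ where $\mu_x$ is the spectral measure of $x$; so $\sup_{n\in Q}\|U^n x - x\| < \varepsilon$ means exactly that $\mu_x$ is concentrated, in an $L^2$ sense, near points $\lambda$ with $\sup_{n\in Q}|\lambda^n-1|$ small.

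The easy direction is that being Kazhdan implies being a modified Kazhdan pair for the \emph{same} $\varepsilon$: if $(Q,\varepsilon)$ is a Kazhdan pair and $V$ is a unitary with an $\varepsilon$-invariant unit vector $x$ over $Q$, then either $V$ has a fixed vector (and $1$ is an eigenvalue, so we are done) — this case is exactly what the Kazhdan property gives. Wait: that is literally the conclusion of the Kazhdan property, so in fact $\Kc(Q) \le$ the modified Kazhdan constant trivially, and this direction needs no generating hypothesis. The substantive direction is the converse: assuming $(Q,\varepsilon')$ is a modified Kazhdan pair and $Q$ generates $\Z$, we must produce an honest Kazhdan constant. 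Let $U$ be a unitary with \emph{no} fixed vector and suppose $x$ is a unit vector with $\sup_{n\in Q}\|U^n x - x\| < \delta$ for some small $\delta$ to be chosen. By the modified hypothesis, $U$ has an eigenvalue, say $U e = \zeta e$ with $\|e\|=1$ and $\zeta \in \T$. Since $U$ has no fixed vector, $\zeta \ne 1$.

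The heart of the argument is to show that, for $\delta$ small enough, the eigenvalue $\zeta$ forced out by the modified property must itself be close to $1$ — close enough that its being a genuine eigenvalue contradicts $U$ having no fixed vector, unless... Actually the cleaner route: use the generating hypothesis. Because $Q$ generates $\Z$, there exist $q_1,\dots,q_s \in Q$ and integers $a_1,\dots,a_s$ with $\sum a_i q_i = 1$. For any unitary $U$ and unit vector $x$ with $\sup_{n\in Q}\|U^n x - x\| < \delta$, a telescoping/triangle-inequality estimate of the form $\|U x - x\| \le \big(\sum_i |a_i|\big)\cdot \delta \cdot C$ (roughly) would force $x$ itself to be almost fixed, hence by a standard compactness/averaging argument $U$ would have a genuine fixed vector — contradiction. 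Thus for $\delta$ below an explicit threshold depending only on a fixed choice of $\{q_i, a_i\}$, \emph{no} such $x$ can exist, which is precisely the statement that $(Q,\delta)$ is a Kazhdan pair. One has to be slightly careful: the telescoping bound on $\|U^{q_i}x - x\|$ in terms of $\|U^{q_i} x - x\|$ is immediate, but bounding $\|U^{\sum a_i q_i} x - x\|$ requires controlling $\|U^{-q_i}x - x\| = \|x - U^{q_i}x\|$ (same) and iterating, which multiplies the error by $\sum |a_i|$; combined with the fact that a unitary with $\|Ux - x\|$ small on a unit vector and no fixed vector is impossible once the bound drops below the relevant threshold (indeed $\Kc(\{1\}) = \sqrt 2$... so any $\varepsilon < \sqrt 2$ works for the singleton, but here we get smallness only after multiplying by $\sum|a_i|$, so we need $\delta \cdot \sum |a_i|$ comparable to $\sqrt 2$ — choose $\delta = \sqrt2 / \sum|a_i|$, say).

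The main obstacle is making the last step rigorous when $Q$ is infinite and the representation $\pi$ is on an infinite-dimensional $H$: one cannot literally "take $\delta \to 0$" after fixing $U,x$ since $\delta$ must be chosen \emph{uniformly} in $U$. The resolution is exactly the generating-set trick above — it yields a single $\delta = \delta(Q) > 0$ independent of $U$ — so that the role of the modified Kazhdan hypothesis in this argument is, somewhat surprisingly, only to guarantee we are in the regime where the telescoping estimate applies; in fact, rereading, the generating argument alone shows $Q$ is Kazhdan with $\Kc(Q) \ge \sqrt2/\sum|a_i|$ whenever $Q$ generates, which would make the theorem's forward direction nearly vacuous. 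So I expect the actual proof to be subtler: the modified hypothesis must be genuinely used to rule out the possibility that an $\varepsilon'$-invariant vector spreads its spectral mass over many eigenvalues $\zeta_j \ne 1$ none of which is close to $1$, and the generating hypothesis is what converts "$U$ has \emph{an} eigenvalue" into "$U$ has the eigenvalue $1$" — presumably by noting that if $Ue = \zeta e$ with $\sup_{n\in Q}|\zeta^n - 1|$ small, then $\sup_{n\in Q}|\zeta^n-1|$ small on a generating set forces $\zeta = 1$ by the same $\sum a_i q_i = 1$ computation applied to the scalar $\zeta$. That scalar version — $|\zeta - 1| \le \sum|a_i|\,|\zeta^{q_i}-1| = \sum|a_i|\sup_{n}|\zeta^n-1|$ — is clean and is, I believe, the key step. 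The delicate part is then propagating "the spectral measure of $x$ is concentrated near $\{\zeta : \sup_Q|\zeta^n-1| < \varepsilon'\}$" from the modified-pair hypothesis down to an actual eigenvalue in that set, which is where one invokes that $Q$ being merely modified-Kazhdan already rules out weakly-mixing-type behavior.
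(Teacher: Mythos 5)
Your easy direction is fine: a Kazhdan pair is trivially a modified Kazhdan pair, with the same constant and without the generating hypothesis. But the substantive implication is not closed, and the two places where your plan breaks are worth naming precisely. First, the telescoping route ``generating alone gives $\Kc(Q)\ge \sqrt{2}/\sum_i|a_i|$'' rests on the false premise that $\|Ux-x\|<\sqrt{2}$ for a unit vector forces $U$ to have a fixed vector (you assert $\Kc(\{1\})=\sqrt{2}$). In fact $\Kc(\{1\})=0$: take $U$ to be multiplication by $\lambda$ on $L^{2}(\T,\mu)$ for a \emph{continuous} measure $\mu$ concentrated near $1$; then $\|Uf-f\|$ is as small as you wish for $f\equiv 1$, yet $U$ has no eigenvalue at all, and the ergodic averages of $f$ converge to $0$, so no compactness or averaging argument can rescue the step. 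No finite subset of $\Z$ is a \ka\ set (this is precisely the failure of Property (T) for $\Z$), so you correctly sensed that your intermediate conclusion was absurd, but you misdiagnosed the error: the telescoping inequality $\|Ux-x\|\le\sum_i|a_i|\,\|U^{q_i}x-x\|$ is valid, and the failure is entirely in the last step. Second, your repaired plan --- extract an eigenvalue $\zeta$ from the modified hypothesis and use $\sum_i a_iq_i=1$ to force $\zeta=1$ --- has two gaps. The eigenvalue supplied by the modified Kazhdan property need have nothing to do with the almost-invariant vector $x$ (consider $U=U_1\oplus(-I)$ with $x$ in the first summand); to tie them together you must pass to the spectral measure $\mu_x$ of $x$ and reason about its atoms, and even then an atom guaranteed by the hypothesis may carry arbitrarily small mass and sit at a point $\lambda_0$ where $\sup_{n\in Q}|\lambda_0^{n}-1|$ is not small, because $\sup_{n\in Q}\|U^{n}x-x\|<\varepsilon$ is only an $L^{2}(\mu_x)$ condition. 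And even when $\sup_{n\in Q}|\lambda_0^{n}-1|<\varepsilon$, the scalar estimate $|\lambda_0-1|\le\bigl(\sum_i|a_i|\bigr)\varepsilon$ shows only that $\lambda_0$ is \emph{close} to $1$, not equal to $1$; small is not zero, so no fixed vector is produced.

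For what it is worth, the present paper does not reprove this theorem either: it is quoted from \cite{BG1} (Th.~6.1) and \cite{BG2} (Th.~4.12). What Section \ref{Section 5} (Theorem \ref{Th B}) does prove is the dictionary between the operator statements and statements about Fourier coefficients of probability measures --- the spectral-theoretic reformulation you correctly identify --- together with the exact relations between the constants $\varepsilon$, $\gamma$, $\delta$. The genuinely hard content, namely that for generating $Q$ the condition ``every $\mu\in\mathcal{P}(\T)$ with $\sup_{n\in Q}(1-\Re e\,\wh{\mu}(n))$ small has a discrete part'' upgrades to ``every such $\mu$ satisfies $\mu(\{1\})>0$'', is exactly the part your plan leaves open, and it is not a routine consequence of the $\sum_i a_iq_i=1$ trick.
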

We define now the modified Kazhdan constant of $Q$ as 
$$ \Kct (Q) = \inf_{V}\inf_{\|x\|=1}\sup_{q\in Q} \|V^qx-x\|,$$
where the first infimum is taken this time over unitary operators $V$ on $H$ without eigenvalues (that is, with continuous spectra). Therefore
$$ 0\le \Kc(Q) \le \Kct(Q) \le \sqrt{2}$$
and for every $Q\subseteq \Z$, $\Kc(Q)=0$ if and only if $\Kct(Q)=0$ if and only if $Q$ is a non-Kazhdan set. The property of being or not a \ka\ set can also be  expressed in terms of Fourier coefficients of probability measures; see Section \ref{Section 5} for a discussion.

The characterization of \ka\ subsets of $\Z$ obtained by the authors in \cite{BG1} (see also \cite{BG2}) implies that the generating subsets $Q$ 
of $\Z$ which satisfy the property stated in (C4) (namely that there exists for every $\mu \in\mathcal{P}_{c}(\T)$ an infinite sequence $(n_{k})_{k\ge 1}$ of elements of $Q$ such that ${\wh{\mu }(n_{k})}\rightarrow{0}$ as ${k}\rightarrow{+\infty)}$ are exactly the \ka\ subsets of $\Z$ with modified \ka\ constant $\Kct(Q)=\sqrt{2}$.  Since $\sqrt{2}$ is the modified \ka\ constant of $\Z$ seen as a subset of itself, $\sqrt{2}$ is the maximal modified \ka\ constant, and thus (C4) can be reformulated as: every generating non-lacunary semigroup $S$ of integers is a \ka\ subset of $\Z$ with \emph{maximal} modified \ka\ constant $\sqrt{2}$. 
The relationship between Furstenberg $\times 2$-$\times 3$ conjecture and modified Kazhdan constants can be also seen directly from Proposition \ref{prop:coeff} below.

\section{Main results}\label{sect:2}
The first main result of this paper is the following:

\begin{theorem}\label{Th3}
 Let $p_{1},\dots,p_{r}$ be positive distinct integers and set \[E = \{p_{1}^{k_{1}}\dots p_{r}^{k_{r}}\,;\,k_1\ge0,\,\dots ,k_r\ge 0\}.\] There exists a continuous probability measure $\mu $ on $\T$ such that 
\[
\inf_{k_{1}\ge 0,\dots,k_{r}\ge 0}|\wh{\mu }(p_{1}^{k_{1}}\dots p_{r}^{k_{r}})|>0.
\]
Equivalently, 
$$ \Kct(E) < \sqrt{2}.$$
\end{theorem}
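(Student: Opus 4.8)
The plan is to construct the measure $\mu$ directly via a Riesz-product-like or lacunary-type construction adapted to the multiplicative semigroup $E$, exploiting the Fayad--Thouvenot machinery of rigidity sequences for weakly mixing systems mentioned in the abstract. First I would reduce to the case of a measure $\nu$ with nonnegative Fourier coefficients: as noted in the introduction, replacing $\mu$ by $\mu*\ti\mu$ turns $\wh\mu(n)$ into $|\wh\mu(n)|^2\ge 0$ and preserves continuity, so it suffices to produce a continuous probability measure $\nu$ with $\wh\nu$ bounded below on $E$. The equivalence with $\Kct(E)<\sqrt 2$ should then follow from the general correspondence between modified Kazhdan constants and lower bounds for Fourier coefficients of continuous measures (Proposition \ref{prop:coeff}, invoked from the excerpt): a continuous measure $\nu$ with $\inf_{n\in E}\wh\nu(n)=:c>0$ gives, by taking $V$ the multiplication-by-$\lambda$ operator on $L^2(\T,\nu)$ — which has continuous spectrum because $\nu$ is atomless — and $x=\mathbbm 1$, a unit vector with $\|V^nx-x\|^2 = 2(1-\wh\nu(n))\le 2(1-c)<2$ for all $n\in E$, hence $\Kct(E)\le\sqrt{2(1-c)}<\sqrt 2$.

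The heart of the matter is building $\nu$. The key point is that $E$ is a \emph{finitely generated} multiplicative semigroup, so although it is non-lacunary, its elements $p_1^{k_1}\cdots p_r^{k_r}$ with $k_1+\dots+k_r=m$ cluster, on a logarithmic scale, near the $m$-fold sums of $\{\log p_1,\dots,\log p_r\}$; the number of elements of $E$ of size $\le N$ grows only polylogarithmically in $N$. The strategy would be to find, for each fixed small $\delta>0$, a rapidly increasing sequence $(q_j)$ such that $E$ is contained in a "thin" neighbourhood of the group generated by the $q_j$ in an appropriate sense, and then run the Fayad--Thouvenot construction of a rigidity sequence $(q_j)$ for a weakly mixing system whose spectral measure $\sigma$ additionally satisfies $\wh\sigma(n)\to 1$ fast enough along $E$. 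Concretely, one wants a probability measure $\sigma\in\mathcal{P}_c(\T)$ (the spectral measure of a weakly mixing transformation) which is \emph{rigid} along $E$, i.e.\ $\wh\sigma(n_k)\to 1$ for some enumeration $(n_k)$ of $E$; but in fact one needs the stronger uniform statement $\inf_{n\in E}\wh\sigma(n)>0$, which the Fayad--Thouvenot method — based on choosing generators $q_{j+1}$ divisible by $q_j$ with ratios growing fast, and taking $\sigma$ to be a suitable weak-$*$ limit of discrete measures — can be pushed to deliver because the whole of $E$, not just a single ray, lies in the arithmetic scaffolding built from the generators.

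The main obstacle — and the genuinely new input — is precisely controlling $\wh\mu$ \emph{simultaneously over all of $E$} rather than along one sequence. For a single sequence $(n_k)$ it is classical (Riesz products, or the Fayad--Thouvenot construction) to get $\wh\mu(n_k)$ bounded below; the difficulty is that $E$ has "width" growing like $m^{r-1}$ at scale $m$, so a naive construction tuned to one generator chain will have Fourier coefficients dropping to near $0$ on the off-chain elements $p_1^{k_1}\cdots p_r^{k_r}$ with several nonzero exponents. Overcoming this requires a construction in which every element of $E$ is, in a uniform quantitative way, an "almost-period" of $\mu$: I would expect the argument to build $\mu$ as an infinite convolution $\bigast_{j\ge 1}\rho_j$ of carefully chosen discrete measures $\rho_j$ supported on roots of unity of orders adapted to $p_1,\dots,p_r$, arranged so that for $n\in E$ with $n\le N_j$ the partial products $\rho_1*\dots*\rho_j$ already have $\wh{(\rho_1*\dots*\rho_j)}(n)$ close to $1$ uniformly, while the tail convolution changes it by a controlled amount; continuity of $\mu$ would come from a Wiener-type argument ensuring $\wh\mu$ is small on a density-one set (which is compatible with being bounded below on the very sparse set $E$). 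Verifying the uniform lower bound over the polynomially-many "directions" at each scale, and that these estimates survive the passage to the infinite convolution, is where the real work lies.
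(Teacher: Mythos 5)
Your reduction to a measure with nonnegative Fourier coefficients via $\mu *\ti{\mu }$, and the derivation of $\Kct(E)<\sqrt{2}$ from a uniform lower bound on $\wh{\nu}$ over $E$ (multiplication by $\lambda $ on $L^{2}(\T,\nu )$, constant unit vector), are correct and match the paper. The gap is in the construction of the measure itself. You correctly identify the obstacle --- controlling $\wh{\mu}$ simultaneously over all of $E$ rather than along one chain --- but your proposed resolution (a single infinite convolution of discrete measures supported on roots of unity, making every element of $E$ a uniform almost-period) is left as a plan whose decisive uniform estimate you yourself flag as ``where the real work lies'', and that plan runs into a real difficulty: roots of unity of order $p_{j}^{l}$ only kill the elements $p_{1}^{k_{1}}\cdots p_{r}^{k_{r}}$ with $k_{j}\ge l$, so for elements with $k_{j}$ small but the other exponents large there is no control, and no single ``arithmetic scaffolding'' built from one chain of generators covers all of $E$. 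Indeed, whether a single continuous measure with $\wh{\mu}(n)\to 1$ along all of the Furstenberg set exists is exactly the open Question \ref{Question 2} of the paper.

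The idea you are missing is that one does not need a single measure that is large on all of $E$. The paper splits $E$ into $r$ sectors according to which exponent is maximal: for fixed $j$, the set $C'_{j}=\{e^{2i\pi np_{j}^{-l}}\,;\,n,l\ge 0\}$ is dense in $\T$ and each of its elements satisfies $\lambda ^{p_{1}^{k_{1}}\cdots p_{r}^{k_{r}}}=1$ as soon as $k_{j}$ is large, uniformly in the remaining exponents; Theorem \ref{Th2} (an asymmetric two-parameter strengthening of the Fayad--Thouvenot rigidity construction, proved earlier in the paper) then produces a continuous $\mu _{j}$ with $\wh{\mu}_{j}(p_{1}^{k_{1}}\cdots p_{r}^{k_{r}})\to 1$ as $k_{j}\to +\infty$ with $k_{i}\le k_{j}$ for all $i$. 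Taking
\[
\mu =\frac{1}{r+1}\Bigl(\sum_{j=1}^{r}\mu _{j}*\ti{\mu }_{j}+\rho \Bigr),
\]
where $\rho $ is continuous with $\wh{\rho}(n)>0$ for all $n$, gives $\liminf \wh{\mu}\ge \frac{1}{r+1}$ along $E$ (every tuple lies in some sector) and hence a positive infimum, since $\wh{\mu}>0$ everywhere. This sector decomposition plus convex combination is what allows the construction to be tuned to one generator at a time; it is precisely the step your single-measure plan does not supply.
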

It should be noted that, as conjecture (C4) does not involve invariant measures, we do not assume in Theorem \ref{Th3} that the integers $p_j$ are multiplicatively independent. Notice also that the statement of Theorem \ref{Th3} is well-known in the lacunary case: if $r=1$ it suffices to consider the classical Riesz product associated to the sequence $(p^{k})_{k\ge 0}$. In the non-lacunary case, Theorem \ref{Th3} disproves Conjecture (C4), as well as the related conjectures (C5) and (C6) of \cite{L} (which are both stronger than (C4)). 
It applies in particular to the Furstenberg set $F=\{2^k3^{k'}\textrm{ ; }k,k'\ge 0\}$ and shows the existence of a measure $\mu\in\mathcal{P}_{c}(\T)$
such that \[\inf_{k,k'\ge 0}\mc(2^{k}3^{k'})>0.\]
 In view of this result, one may naturally wonder for which values of $\delta \in(0,1)$ there exists a measure $\mu \in\mathcal{P}_{c}(\T)$ such that 
\begin{equation*}
 \ds{\inf_{\genfrac{}{}{0pt}{}{k,k'\ge 0}{}}\mc(2^{k}3^{k'})\ge\delta},
\end{equation*}

\noindent or, equivalently, whether the Furstenberg set $F$ is a \ka\ set in $\Z$, and if yes, with which (modified) \ka\ constant. 
In this direction, we prove the following result:

\begin{theorem}\label{Th3bis}
Let $F=\{2^k3^{k'}\textrm{ ; }k,k'\ge 0\}$. Then $\Kct(F) \le 1$. More precisely, there exists for every $\delta\in(0,1/2)$ a continuous probability measure $\mu $ on $\T$ with nonnegative Fourier coefficients such that 
\[
\inf_{k,k'\ge 0}\mc(2^{k}3^{k'})>\delta.
\]
\end{theorem}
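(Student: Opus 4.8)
The plan is to build $\mu$ as a weak-$*$ limit of Riesz-type products, exploiting the same mechanism behind Theorem \ref{Th3} but keeping careful track of the constants so as to push the infimum above a prescribed $\delta<1/2$. The starting observation is that a continuous measure $\mu$ with $\inf_{k,k'\ge 0}\wh\mu(2^k3^{k'})>\delta$ is exactly what produces a unitary operator $V$ with continuous spectrum and a unit vector $x$ with $\|V^qx-x\|^2 = 2(1-\mathrm{Re}\,\wh\mu(q)) \le 2(1-\delta)$ for all $q\in F$; taking $\delta\uparrow 1/2$ forces $\sup_{q\in F}\|V^qx-x\|\to 1$, whence $\Kct(F)\le 1$. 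So the analytic content is entirely in the construction of the measure, and the dynamical/Kazhdan reformulation is essentially Proposition \ref{prop:coeff} applied with $\nu=\mu*\ti\mu$ (which is why we may as well ask for nonnegative Fourier coefficients from the outset).

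For the construction I would follow the Fayad--Thouvenot rigidity-sequence scheme referenced in the abstract. First, reduce $F$ to a single sequence: since $\{2^k3^{k'}\}$ is countable, enumerate it as $(n_j)_{j\ge1}$ in increasing order and aim to produce $\mu$ that is \emph{rigid} along $(n_j)$, i.e. $\wh\mu(n_j)\to 1$ — except that we cannot get all the way to $1$ uniformly, so instead we produce, for each target $\delta<1/2$, a measure $\mu_\delta$ with $\wh{\mu_\delta}(n_j)\ge \delta'$ for all $j$, where $\delta<\delta'<1/2$ is an auxiliary threshold. Concretely, write $F$ as a union of geometric-like blocks and, on a rapidly growing subsequence of scales $(N_\ell)$, define $\mu_\delta$ as a weighted convolution of measures each of which is almost-invariant (in the Fourier sense) under multiplication by $2$ and by $3$ up to the current scale. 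The combinatorial point that makes $F$ tractable — and that is presumably where the bound $1/2$ rather than something worse comes from — is that $F$, while not lacunary, has the property that consecutive elements have ratios bounded away from $1$ only finitely often in each dyadic window, so a single well-chosen trigonometric polynomial of controlled spectrum can be $\varepsilon$-invariant simultaneously on a long initial segment $\{2^k3^{k'}\le N_\ell\}$ while still contributing a definite amount of mass; iterating and taking a weak-$*$ limit gives continuity (via a Riesz-product independence/lacunarity argument on the scales) together with the uniform lower bound on $\wh\mu$ over all of $F$.

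The main obstacle I expect is making the two requirements compatible quantitatively: continuity of the limit measure forces the "innovation" at each scale to spread Fourier mass out (so $\wh\mu(n)$ must decay along density-one sets, by Wiener), while the requirement $\wh\mu(n_j)\ge\delta'$ for \emph{every} $n_j\in F$ forbids this decay precisely along $F$. Reconciling these needs a genuinely multiplicative gap structure: one has to show that the scales $(N_\ell)$ can be chosen so sparse that the interaction terms in the product expansion of $\wh\mu(2^k3^{k'})$ — coming from cross-products between different scales — are summably small, uniformly in $(k,k')$, and this is where the Fayad--Thouvenot construction does the heavy lifting. A secondary technical point is controlling how much is lost at each stage: each new factor multiplies the running lower bound by a factor slightly less than $1$, so one needs the product of these factors to stay above $\delta'$, which dictates how fast the per-scale errors must go to zero and forces the ceiling of $1/2$ (any attempt to reach $1/2$ exactly would require the infinite product of the loss factors to be bounded below by a positive constant while each factor must be strictly less than some explicit bound tied to the $\times2$/$\times3$ structure). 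Finally, to get nonnegative Fourier coefficients I would, as in the introduction, replace the constructed $\mu$ by $\mu*\ti\mu$, noting $\wh{\mu*\ti\mu}(n)=|\wh\mu(n)|^2$, and absorb the resulting squaring into the choice of the auxiliary threshold $\delta'$ (so one actually builds $\mu$ with $\wh\mu(n_j)\ge\sqrt{\delta'}$, then squares).
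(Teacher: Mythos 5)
Your outer reductions are fine: a continuous $\mu$ with nonnegative coefficients and $\wh{\mu}\ge\delta$ on $F$ gives, via multiplication by $\lambda$ on $L^{2}(\T,\mu)$, a unitary with continuous spectrum and $\sup_{q\in F}\|V^{q}x-x\|\le\sqrt{2(1-\delta)}$, so $\Kct(F)\le 1$ as $\delta\uparrow 1/2$; and passing to $\mu*\ti{\mu}$ to force nonnegative coefficients is exactly what the paper does. The gap is in the construction itself. You propose to enumerate all of $F$ as a single increasing sequence $(n_j)$ and run a Fayad--Thouvenot/Riesz-product scheme to obtain one measure with $\wh{\mu}(n_j)\ge\delta'$ for every $j$. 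But the Fayad--Thouvenot mechanism (Theorem \ref{Th1}) requires the set $C=\{\lambda\in\T:\lambda^{n_j}\to 1\}$ to be dense, and for the full Furstenberg sequence no dense family is available: a root of unity of order $2^{a}$ satisfies $\lambda^{2^{k}3^{k'}}=1$ once $k\ge a$, but for a fixed $k<a$ and $k'\to\infty$ the powers $\lambda^{2^{k}3^{k'}}$ do not converge to $1$, and symmetrically for $3$-power roots; whether any continuous measure can have Fourier coefficients tending to $1$ (or uniformly close to $1$) along all of $F$ is precisely the open Question \ref{Question 2}, so this route is not something one can wave through. Your Riesz-product surrogate is also unsupported: $F$ is sublacunary (consecutive ratios tend to $1$), so the frequency-independence on which Riesz-product continuity and product expansions rest is absent, and the asserted property that ratios are "bounded away from $1$ only finitely often in each dyadic window" is neither correct in a usable form nor a substitute for lacunarity. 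Finally, your explanation of the constant $1/2$ as the survival of an infinite product of per-scale losses is not where the bound comes from.

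The missing idea is a splitting of $F$, not a global construction. Write $F$ as the union of the regions $\{(k,k'):0\le k'\le k\}$ and $\{(k,k'):0\le k\le k'\}$. On the first region, all $2$-power roots of unity $\lambda$ satisfy $\lambda^{2^{k}3^{k'}}\to 1$ as $k\to\infty$ uniformly in $0\le k'\le k$, so the hypothesis of Theorem \ref{Th2} (with $m_k=2^k$, $n_{k'}=3^{k'}$, $\psi(k)=k$) holds and yields a continuous $\mu_1$ with $|\wh{\mu}_1(2^{k}3^{k'})|\ge\sqrt{2\delta}$ on that region; symmetrically, $3$-power roots of unity give $\mu_2$ for the second region. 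Then $\mu=\frac{1}{2}\bigl(\mu_1*\ti{\mu}_1+\mu_2*\ti{\mu}_2\bigr)$ is continuous, has nonnegative Fourier coefficients, and since every $2^{k}3^{k'}$ lies in at least one region, $\wh{\mu}(2^{k}3^{k'})\ge\frac{1}{2}\cdot 2\delta=\delta$. The ceiling $1/2$ is exactly the weight of each of the two measures in this average, i.e.\ the price of covering $F$ by two half-regions on which the rigidity-type construction applies, not an accumulation of multiplicative losses along scales.
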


\subsection*{Rigidity sequences} Our strategy for proving Theorem \ref{Th3} is to construct measures $\mu \in\mathcal{P}_{c}(\T)$ whose Fourier coefficients tend to $1$ along a substantial part of the set $\{p_{1}^{k_{1}}\dots p_{r}^{k_{r}}\,;\,k_{1}\ge 0,\dots,k_{r}\ge 0\}$. In other words, we show that certain large subsets of this set form are, when taken in a strictly increasing order, rigidity sequences in the sense of \cite{BDLR} or \cite{EG}. Recall that a dynamical system $(X,\mathcal{B},m;T)$ on a Borel probability space is called \emph{rigid} if there exists a strictly increasing sequence of integers $(n_{k})_{k\ge 1}$ such that ${||U_{T}^{n_{k}}f-f||}\rightarrow{0}$ as ${k}\rightarrow{+\infty}$ for every $f\in L^{2}(X,\mathcal{B},m)$, where $U_{T}$ denotes as usual the Koopman operator 
${f\mapsto f\circ T}$ associated to $T$ on $L^{2}(X,\mathcal{B},m)$. Equivalently, ${m(T^{-n_{k}}A\bigtriangleup A)}\rightarrow{0}$ as ${k}\rightarrow{+\infty}$ for every $A\in\mathcal{B}$. We say in this case that the system is rigid with respect to the sequence $(n_{k})_{k\ge 1}$, or that $(n_{k})_{k\ge 1}$ is \emph{a rigidity sequence for} $(X,\mathcal{B},m;T)$. The case where the system $(X,\mathcal{B},m;T)$ is weakly mixing is particularly interesting, and is the object of the works 
\cite{BDLR} and \cite{EG}. A strictly increasing sequence $(n_{k})_{k\ge 1}$ of integers is called \emph{a rigidity sequence} if there exists a weakly mixing  system which is rigid with respect to 
$(n_{k})_{k\ge 1}$.
\par\smallskip 
Using Gaussian dynamical systems, one can show that $(n_{k})_{k\ge 1}$ is a rigidity sequence if and only if there exists a measure $\mu \in\mathcal{P}_{c}(\T)$ such that 
${\wh{\mu }(n_{k})}\rightarrow{1}$ as ${k}\rightarrow{+\infty}$. The study of rigidity sequences was initiated in \cite{BDLR} and \cite{EG}. Further works on this topic include  the papers \cite{Aa}, \cite{A}, \cite{AHL}, \cite{G}, \cite{FT} \cite{FK} and \cite{Grie} among others. The paper \cite{FT} by Fayad and Thouvenot is especially relevant here: the authors re-obtain a result of Adams \cite{A}, showing that whenever $(n_{k})_{k\ge 1}$ is a rigidity sequence for an ergodic rotation on the circle, it is a rigidity sequence for a weakly mixing system. The proof of this result in \cite{A} relies on an involved construction of a suitable weakly mixing system by cutting and stacking, while the authors of \cite{FT} proceed by a direct construction of suitable continuous probability measures: they show that if ${\lambda ^{n_{k}}}\rightarrow{1}$ for some $\lambda =e^{2i\pi \theta }\in\T$ with $\theta \in\R\setminus\Q$, there exists $\mu \in\mathcal{P}_{c}(\T)$ such that 
${\wh{\mu }(n_{k})}\rightarrow{1}$.
\par\smallskip
The most important tool for proving Theorems \ref{Th3} and \ref{Th3bis} is the following theorem, which generalizes the result of Fayad and Thouvenot and provides some new examples of non-\ka\ subsets of $\Z$:

\begin{theorem}\label{Th1}
 Let $\nkp{0}$ be a strictly increasing sequence of integers. Suppose that the set
 \[
C=\{\lambda \in\T\ ;{\lambda ^{\nk}\rightarrow 1}\ \textrm{as}\ {k\rightarrow+\infty}\}
\]
is dense in $\T$. Then there exists for every $\varepsilon >0$ a measure $\mu \in\mathcal{P}_{c}(\T)$ such that
 ${\mc(\nk)}\rightarrow{1}$ as
  ${k}\rightarrow{+\infty}$
and $\sup_{k\ge 0}|\mc(\nk)-1|<\varepsilon $.
In particular, $\{n_k\textrm{ ; } k\ge 0\}$ is a non-Kazhdan subset of $\Z$.
\end{theorem}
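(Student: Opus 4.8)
The plan is to construct the desired measure $\mu\in\mathcal{P}_c(\T)$ as a weak-$*$ limit of a carefully chosen sequence of Riesz-type products, following the strategy of Fayad and Thouvenot but adapting it to the situation where the set $C$ of ``rigidity points'' is only dense rather than containing a single fixed irrational rotation. First I would fix a countable dense subset $\{\lambda_j = e^{2i\pi\theta_j}\}_{j\ge1}$ of $C$, so that for each $j$ one has $\lambda_j^{n_k}\to 1$ as $k\to+\infty$. One may arrange that at least one $\theta_j$ is irrational (since $C$ is dense and the rationals have empty interior, $C$ must meet the irrationals); this irrationality is what ultimately forces the limiting measure to be continuous, via the fact that a discrete measure concentrated on roots of unity cannot have its mass spread over a dense orbit. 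The measure will be built so that the ``core'' of its support sits near the points $\lambda_j$ at scale depending on a rapidly growing subsequence of the $n_k$, guaranteeing $\wh{\mu}(n_k)\to 1$.

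Next I would set up the inductive construction. Choose a very rapidly increasing subsequence $(n_{k_m})_{m\ge1}$ of $(n_k)$ together with small parameters $\delta_m\downarrow 0$ (with $\sum\delta_m<\varepsilon$ or an $\varepsilon$-dependent bound controlling the supremum). At stage $m$, using density of $C$ and the convergence $\lambda^{n_k}\to1$ on $C$, pick finitely many points of $C$ and a probability measure $\sigma_m$ supported on a union of small arcs such that $\sigma_m$ is ``almost invariant'' under $z\mapsto z^{n_{k_j}}$ for all $j\le m$, i.e. $\sup_{j\le m}\sup_{\mathrm{supp}\,\sigma_m}|z^{n_{k_j}}-1|$ is tiny; this is possible because on the finite set of centers the powers $z^{n_{k_j}}$ are all near $1$ for $k$ large, and by continuity this persists on small enough arcs. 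Form partial convolution products $\mu_M = \sigma_1 * \sigma_2 * \cdots * \sigma_M$ (or a Riesz-type product of the associated trigonometric polynomials), which are probability measures, and arrange the geometry — arcs shrinking fast, centers chosen along the dense orbit structure — so that the sequence $(\mu_M)$ is weak-$*$ Cauchy with limit $\mu$. The convolution structure gives $\wh{\mu_M}(n) = \prod_{i\le M}\wh{\sigma_i}(n)$, and the near-invariance yields $\wh{\sigma_i}(n_{k_j})\approx 1$ for $i\ge j$, so a tail estimate shows $\wh{\mu}(n_{k_j})\to1$; handling the finitely many initial $n_k$ below $n_{k_1}$ and the indices between consecutive $n_{k_m}$ requires only that the first few factors already be $\varepsilon$-close to $1$ on those, which we build in from the start to get $\sup_{k\ge0}|\wh{\mu}(n_k)-1|<\varepsilon$.

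The continuity of $\mu$ is the heart of the matter and I expect it to be the main obstacle. The issue is that almost-invariance under the maps $z\mapsto z^{n_k}$ is automatically satisfied by the Dirac mass at any root of unity of appropriate order, so one must rule out that the construction degenerates to an atomic measure. Here I would exploit that $C$ contains an irrational point $\lambda_{j_0}=e^{2i\pi\theta_{j_0}}$: by spreading the support of each $\sigma_m$ over translates by the irrational rotation $\lambda_{j_0}$ (or over a long piece of the orbit $\{\lambda_{j_0}^\ell z\}$), one ensures that each $\mu_M$ has no atom of mass exceeding some $\eta_M\to0$ — concretely, by taking $\sigma_m$ to be (close to) a uniform average over $N_m$ rotated copies of a small arc, with $N_m\to\infty$, the maximal atom of the convolution is bounded by $\prod \|\sigma_i\|_\infty$-type quantities that go to $0$. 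Passing to the weak-$*$ limit, $\mu(\{\lambda\}) = 0$ for every $\lambda$, so $\mu$ is continuous. Alternatively one can argue via Wiener's lemma directly: show $\tfrac1N\sum_{n=1}^N|\wh{\mu}(n)|^2\to 0$ by controlling the spectral measure coming from the product structure. The final sentence of the theorem — that $\{n_k : k\ge0\}$ is non-Kazhdan — then follows immediately: taking the Koopman operator (multiplication by $z$) on $L^2(\T,\mu)$, the constant function $\mathbbm{1}$ is a unit vector with $\|U^{n_k}\mathbbm{1} - \mathbbm{1}\|^2 = 2(1-\mathrm{Re}\,\wh{\mu}(n_k))\to 0$, so for every $\varepsilon>0$ there is a fixed-vector-free (because $\mu$ is continuous, hence $U$ has no eigenvalue at $1$ beyond constants — indeed on $L^2_0(\T,\mu)$ there is no invariant vector) unitary with a vector that is $\varepsilon$-invariant on $\{n_k\}$, contradicting the Kazhdan property; thus $\Kc(\{n_k : k\ge0\})=0$.
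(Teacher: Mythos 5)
Your overall architecture (a weak-$*$ limit of finite-stage approximants built from points of $C$, chosen so that the Fourier coefficients along $(n_k)$ stay uniformly close to $1$) is the right one, and the final deduction of non-Kazhdanness via the multiplication operator on $L^2(\T,\mu)$ is correct. But the two delicate points of the proof --- why the limit is continuous, and why the early building blocks do not spoil the late Fourier coefficients --- are exactly where your proposal has genuine gaps. First, your continuity mechanism rests on the claim that $C$ must contain an irrational point ``since $C$ is dense and the rationals have empty interior.'' That inference is false: a dense subset of $\T$ can be entirely contained in the roots of unity (the roots of unity are themselves dense), and $C$, being a subgroup, is dense as soon as it is infinite. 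In the paper's main application (Theorem \ref{Th3}), the relevant set $C'_j=\{e^{2i\pi np_j^{-l}}\}$ consists \emph{only} of roots of unity, so your argument would fail precisely where it is needed. Moreover, even if some irrational $\lambda_{j_0}\in C$ existed, averaging $\sigma_m$ over $N_m$ rotates of $\lambda_{j_0}$ with $N_m\to\infty$ makes $\wh{\sigma}_m(n_{k_j})$ behave like a Dirichlet kernel $\frac{1}{N_m}\sum_{\ell\le N_m}\lambda_{j_0}^{\ell n_{k_j}}$, which is small (not close to $1$) at the already-fixed frequencies $n_{k_j}$, $j\le m$; the device you invoke to force continuity directly destroys the rigidity estimate.

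Second, the building blocks cannot be ``probability measures supported on a union of small arcs'' with any continuous (e.g.\ absolutely continuous) component: such a factor $\sigma_i$ is a Rajchman measure, so $\wh{\sigma}_i(n_{k_j})\to 0$ as $j\to\infty$ by Riemann--Lebesgue, and the product $\wh{\mu}(n_{k_j})=\prod_i\wh{\sigma}_i(n_{k_j})$ then tends to $0$, not $1$. Near-invariance of $\sigma_m$ under $z\mapsto z^{n_{k_j}}$ for $j\le m$ says nothing about the factors $\sigma_i$ with $i<j$ at the late frequency $n_{k_j}$; the only thing that controls those is that every atom of every $\sigma_i$ lies in $C$. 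This forces the approximants to be purely atomic with atoms in $C$, and that is exactly what the paper does: $\mu_p=\sum_{i=1}^{2^p}a_i^{(p)}\dpi{\lambda_i}$ with all $\lambda_i\in C$, where at each stage every atom is split in ratio $(1-\varepsilon):\varepsilon$ and the small piece is moved to a new point of $C$ chosen extremely close to the old one (close enough to preserve all previously achieved estimates on the finitely many frequencies already handled). Continuity of the limit then comes from a purely combinatorial, irrationality-free mechanism: each individual atom of $\mu_p$ has mass at most $(1-\varepsilon)^p$, the atoms congruent to $r$ modulo $2^q$ all lie within $\eta_q$ of $\lambda_r$, and the $2^q$ resulting clusters are pairwise separated; hence any atom of a weak-$*$ limit would have mass at most $(1-\varepsilon)^q$ for every $q$, i.e.\ zero. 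You would need to replace your continuity argument by something of this kind for the proof to go through.
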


Notice that $C$, like every subgroup of the circle group, is dense in $\T$ as soon as it is infinite.
We deduce from Theorem \ref{Th1}  the following two-dimensional statement, which is asymmetric and involves a uniformity assumption.

\begin{theorem}\label{Th2}
 Let $(\mk)_{k\ge 0}$ and $(n_{k'})_{k'\ge 0}$ be two strictly increasing sequences of integers.  Let also ${\psi \,:\,\N}\longrightarrow{\N}$ be such that   ${\psi (k)\rightarrow +\infty}$ as
 ${k\rightarrow+\infty}$, and set
\[
D_\psi=\{(k,k')\in\N^{2}\, ;\,0\le k'\le \psi(k)\}.
\]
  Suppose that the set 
  \[
C'_{\psi}=\{\lambda \in\T\ ;{\lambda ^{\mk n_{k'}}\rightarrow1}\ \textrm{as}\ {k\rightarrow+\infty}, \;(k,k')\in D_\psi\}
\]
is dense in $\T$. 
There exists for every $\varepsilon >0$ a measure $\mu \in\mathcal{P}_{c}(\T)$ such that
 ${\mc(\mk n_{k'})}\rightarrow{1}$ as 
  ${k}\rightarrow{+\infty}$ with $(k,k')\in D_\psi$
and
 \[\sup_{k\ge0,\;0\le k'\le \psi(k)}|\mc(\mk n_{k'})-1|<\varepsilon .\]
\par\smallskip
\noindent
 In particular, $\{m_k n_{k'}\textrm{ ; } k\ge 0,\; 0\le k'\le \psi(k)\}$ is a non-Kazhdan subset of $\Z$.
\end{theorem}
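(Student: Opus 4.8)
The plan is to deduce Theorem~\ref{Th2} from Theorem~\ref{Th1} by re-indexing the relevant set of products as a single strictly increasing sequence. Put $S=\{m_kn_{k'}\,;\,(k,k')\in D_\psi\}$. Since $S$ is infinite (which is easily checked), let $(N_j)_{j\ge0}$ be its strictly increasing enumeration; I would aim to show that $(N_j)_{j\ge0}$ satisfies the hypothesis of Theorem~\ref{Th1}, and that the conclusion of that theorem applied to $(N_j)$ is exactly what is asserted here.

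The key elementary observation is that for every $k_0\ge0$ the set $\{m_kn_{k'}\,;\,0\le k\le k_0,\ 0\le k'\le\psi(k)\}$ is \emph{finite}, being a finite union over $k\le k_0$ of finite sets (this is where the finiteness of $\psi(k)$ is used). From this I would argue that $C'_\psi\subseteq\{\lambda\in\T\,;\,\lambda^{N_j}\to1\}$: given $\lambda\in C'_\psi$ and $\varepsilon>0$, choose $K$ with $|\lambda^{m_kn_{k'}}-1|<\varepsilon$ for all $k\ge K$ and $0\le k'\le\psi(k)$; then any $N\in S$ larger than $\max\{m_kn_{k'}\,;\,k<K,\ 0\le k'\le\psi(k)\}$ must be of the form $m_kn_{k'}$ with $k\ge K$, hence $|\lambda^N-1|<\varepsilon$, and since $N_j\to+\infty$ this gives $\lambda^{N_j}\to1$. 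In particular $\{\lambda\in\T\,;\,\lambda^{N_j}\to1\}$ is dense in $\T$.

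I would then apply Theorem~\ref{Th1} to $(N_j)_{j\ge0}$ to obtain, for any prescribed $\varepsilon>0$, a measure $\mu\in\mathcal{P}_{c}(\T)$ with $\wh{\mu}(N_j)\to1$ and $\sup_{j\ge0}|\wh{\mu}(N_j)-1|<\varepsilon$. The bound $\sup_{k\ge0,\,0\le k'\le\psi(k)}|\wh{\mu}(m_kn_{k'})-1|<\varepsilon$ is then immediate, since each $m_kn_{k'}$ belongs to $S$. For the convergence ${\wh{\mu}(m_kn_{k'})\to1}$ as $k\to+\infty$ with $(k,k')\in D_\psi$, given $\varepsilon'>0$ I would pick $J$ with $|\wh{\mu}(N_j)-1|<\varepsilon'$ for $j\ge J$, and then use that $m_kn_{k'}\to+\infty$ as $k\to\infty$ uniformly over $0\le k'\le\psi(k)$ (the degenerate possibility $n_0=0$, forcing $m_k\cdot0=0$, being harmless since $\wh{\mu}(0)=1$) to find $K$ with $m_kn_{k'}\ge N_J$ for all $k\ge K$ and $0\le k'\le\psi(k)$; then $|\wh{\mu}(m_kn_{k'})-1|<\varepsilon'$ for such $(k,k')$. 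Finally, $\{m_kn_{k'}\,;\,k\ge0,\ 0\le k'\le\psi(k)\}=\{N_j\,;\,j\ge0\}$ is a non-Kazhdan subset of $\Z$ directly by the last assertion of Theorem~\ref{Th1}.

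I do not expect any serious obstacle: once Theorem~\ref{Th1} is available, Theorem~\ref{Th2} is a formal consequence. The only point requiring genuine care is matching the ``one-sided'' notion of convergence along $D_\psi$ appearing in the statement with ordinary convergence along the enumeration $(N_j)$, and this is precisely where the uniformity built into the hypothesis --- the constraint $0\le k'\le\psi(k)$ with $\psi(k)<+\infty$, rather than $k'\ge0$ unrestricted --- enters: it is what makes each ``low-$k$ slice'' of $S$ finite, and hence prevents a large product $m_kn_{k'}$ from being realized with $k$ bounded.
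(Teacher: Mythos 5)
Your proposal is correct and follows essentially the same route as the paper: the paper likewise enumerates $\{m_kn_{k'}\,;\,(k,k')\in D_\psi\}$ as a single strictly increasing sequence, checks the two mutual tail inclusions between that enumeration and the sets $\{m_kn_{k'}\,;\,(k,k')\in D_\psi,\ k\ge k_1\}$ (your ``finite low-$k$ slice'' observation), and applies Theorem~\ref{Th1}. No gaps.
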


Given a doubly indexed sequence $(z_{k,k'})_{k,k'\ge 0}$ of complex numbers, saying that $z_{k,k'}$ converges to $z\in\C$ as ${k}\rightarrow{+\infty}$ with $(k,k')\in D_\psi$ means that
there exists for every $\gamma >0$ an integer $k_{0}$ such that
$|z_{k,k'}-z|<\gamma $ for every $(k,k')\in\N^2$ with $k\ge k_{0}$ and $0\le k'\le \psi(k)$.
\par\smallskip
Remark also that the assumption of Theorem \ref{Th2}
 is in particular satisfied if the set  
 \[
C'=\{\lambda \in\T\ ;{\lambda ^{\mk n_{k'}}\rightarrow 1}\ \textrm{as}\ {k\rightarrow+\infty}\ \textrm{uniformly in}\ k'\}
\]
is dense in $\T$.
\par\smallskip
Theorem \ref{Th1} allows us to retrieve essentially all known examples of rigidity sequences (notable exceptions being the examples of \cite{FK} and \cite{Grie}). We state separately as Corollaries \ref{Cor1} and  \ref{Cor2} the parts of Theorems \ref{Th1} and \ref{Th2} dealing with rigidity sequences:

 \begin{corollary}\label{Cor1}
 Let $\nkp{0}$ be a strictly increasing sequence of integers. Suppose that the set
 \[
C=\{\lambda \in\T\ ;\lambda ^{\nk}\rightarrow 1\ \textrm{as}\ k\rightarrow +\infty\}
\]
is dense in $\T$. 
Then $\nkp{1}$ is a rigidity sequence.
 \end{corollary}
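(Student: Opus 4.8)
\noindent\textit{Proof plan.}
The plan is to treat this as an essentially immediate consequence of Theorem~\ref{Th1} together with the standard Gaussian description of rigidity sequences recalled in the introduction. First I would apply Theorem~\ref{Th1} to the sequence $\nkp{0}$: the density of $C$ is precisely the hypothesis of that theorem, so (taking for instance $\varepsilon =1$) it produces a continuous probability measure $\mu \in\mathcal{P}_{c}(\T)$ with $\mc(\nk)\rightarrow 1$ as $k\rightarrow+\infty$. By the equivalence recalled in the introduction --- $\nkp{1}$ is a rigidity sequence if and only if some $\mu \in\mathcal{P}_{c}(\T)$ satisfies $\mc(\nk)\rightarrow 1$ --- this already yields the conclusion.

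For completeness I would also spell out the Gaussian construction underlying that equivalence. Replace $\mu $ by its symmetrization $\nu :=\mu *\ti{\mu }$, which (as noted in the introduction) still lies in $\mathcal{P}_{c}(\T)$ and satisfies $\wh{\nu }(n)=|\mc(n)|^{2}\ge 0$, so in particular $\wh{\nu }(\nk)=|\mc(\nk)|^{2}\rightarrow 1$; this makes $\nu $ an admissible, i.e.\ continuous and symmetric, spectral measure. Let $(X,\mathcal{B},m;T)$ be the associated Gaussian dynamical system, so that $U_{T}^{n}$ restricted to the first Wiener chaos $H_{1}$ becomes multiplication by $\lambda \mapsto\lambda ^{n}$ on $L^{2}(\T,\nu )$. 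Continuity of $\nu $ makes this system weakly mixing. Taking for $f$ the element of $H_{1}$ corresponding to the constant function $1$, one gets
\[
\|U_{T}^{\nk}f-f\|^{2}=\int_{\T}|\lambda ^{\nk}-1|^{2}\,d\nu (\lambda )=2\bigl(1-\wh{\nu }(\nk)\bigr)\longrightarrow 0 .
\]
Via the Wiener--It\^o chaos decomposition $L^{2}(X,\mathcal{B},m)=\bigoplus_{n\ge 0}H_{1}^{\odot n}$, together with the fact that rigidity along a sequence on $H_{1}$ propagates to each symmetric power $H_{1}^{\odot n}$, one deduces $\|U_{T}^{\nk}g-g\|\rightarrow 0$ for every $g\in L^{2}(X,\mathcal{B},m)$. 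Hence the system is weakly mixing and rigid along $\nkp{0}$, a fortiori along $\nkp{1}$, which is exactly what it means for $\nkp{1}$ to be a rigidity sequence.

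I do not expect a genuine obstacle here: all the substance is already contained in Theorem~\ref{Th1}, and the remaining ingredients --- symmetrizing the measure, invoking the Gaussian functor, checking weak mixing via continuity of the spectral measure, and transporting first-chaos rigidity through the chaos decomposition --- are classical. The only point to keep an eye on is the harmless shift between $\nkp{0}$ and $\nkp{1}$: removing the first term changes neither the density hypothesis on $C$ nor the tail condition $\mc(\nk)\rightarrow 1$, so whether the sequence starts at $k=0$ or $k=1$ is immaterial.
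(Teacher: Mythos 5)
Your argument is correct, and it is precisely the route the paper itself flags in Section~\ref{sect:2} when it says that Corollary~\ref{Cor1} is ``an immediate consequence of Theorem~\ref{Th1}'': apply Theorem~\ref{Th1} to get $\mu\in\mathcal{P}_c(\T)$ with $\wh{\mu}(n_k)\to 1$, then invoke the Gaussian characterization of rigidity sequences recalled in the introduction. However, the proof the paper actually writes out (Section~\ref{Secajoutee}) is different: it is a self-contained direct construction in the style of Fayad--Thouvenot, building the measure as a $w^*$-limit of uniform atomic measures $\mu_p=2^{-p}\sum_{i=1}^{2^p}\dpi{\lambda_i}$ and thereby bypassing the extra bookkeeping (the weights $a_i^{(p)}$ and conditions \ref{reference4}--\ref{reference6}) needed in Theorem~\ref{Th1} to also control $\sup_k|\wh{\mu}(n_k)-1|$. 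What your route buys is brevity and a clean separation of concerns, at the cost of relying on the (classical but nontrivial) Gaussian machinery --- chaos decomposition, weak mixing from continuity of the spectral measure, propagation of first-chaos rigidity to symmetric powers --- which the paper only cites rather than proves; what the paper's direct proof buys is a measure-theoretic argument of independent interest that never leaves harmonic analysis. Your details are all sound, including the symmetrization $\nu=\mu*\ti{\mu}$, the identity $\|U_T^{n_k}f-f\|^2=2(1-\wh{\nu}(n_k))$ on the first chaos, and the observation that the indexing shift from $k\ge 0$ to $k\ge 1$ is immaterial.
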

\noindent

 \begin{corollary}\label{Cor2}
  Let $(\mk)_{k\ge 0}$ and $(n_{k'})_{k'\ge 0}$ be two strictly increasing sequences of integers.  Let also ${\psi \,:\,\N}\longrightarrow{\N}$ be such that   ${\psi (k)\rightarrow+\infty}$ as
 ${k\rightarrow+\infty}$, and suppose that the set 
  \[
C'_{\psi}=\{\lambda \in\T\ ;{\lambda ^{\mk n_{k'}}\rightarrow 1}\ \textrm{as}\ {k\rightarrow +\infty}, \;(k,k')\in D_\psi\}
\]
is dense in $\T$. 
Then there exists a continuous probability measure $\mu $ on $\T$ such that ${\mc(\mk n_{k'})\rightarrow 1}$ as ${k\rightarrow+\infty}$ with $(k,k')\in D_\psi$.
 \end{corollary}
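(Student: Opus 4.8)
The plan is to deduce Corollary \ref{Cor2} directly from Theorem \ref{Th2}. Since Theorem \ref{Th2} already produces, for every $\varepsilon>0$, a measure $\mu\in\mathcal{P}_c(\T)$ with $\mc(\mk n_{k'})\rightarrow 1$ as $k\rightarrow+\infty$ with $(k,k')\in D_\psi$, the corollary is essentially a restatement of the convergence part of that theorem: one simply forgets the quantitative bound $\sup|\mc(\mk n_{k'})-1|<\varepsilon$ and applies Theorem \ref{Th2} with any fixed $\varepsilon$, say $\varepsilon=1$. So the first and only substantive step is to invoke Theorem \ref{Th2}, noting that its hypothesis---density of $C'_\psi$ in $\T$---is exactly the hypothesis of Corollary \ref{Cor2}.

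The remaining point to address is why the conclusion of Theorem \ref{Th2}, stated in terms of convergence of Fourier coefficients $\mc(\mk n_{k'})\rightarrow 1$, is the ``rigidity sequence'' statement we want. Here I would recall the standard Gaussian-dynamical-systems correspondence mentioned earlier in the introduction: a strictly increasing sequence of integers is a rigidity sequence (for some weakly mixing system) if and only if there is a measure $\mu\in\mathcal{P}_c(\T)$ whose Fourier coefficients tend to $1$ along that sequence. Strictly speaking, to match this formulation one should enumerate the set $\{m_k n_{k'}\,;\,(k,k')\in D_\psi\}$ in strictly increasing order as a single sequence $(N_j)_{j\ge 1}$; the convergence $\mc(\mk n_{k'})\rightarrow 1$ in the sense of the $D_\psi$-limit (i.e.\ for every $\gamma>0$ there is $k_0$ with $|\mc(\mk n_{k'})-1|<\gamma$ whenever $k\ge k_0$ and $0\le k'\le\psi(k)$) is readily seen to imply $\mc(N_j)\rightarrow 1$ as $j\rightarrow+\infty$, because each integer $N$ with $\mc(N)$ far from $1$ can arise as some $m_k n_{k'}$ only for finitely many pairs, all with $k<k_0$, hence from a bounded (finite) portion of the enumeration. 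For the purposes of the corollary as stated, however, it suffices to exhibit the measure $\mu$ itself, which Theorem \ref{Th2} provides.

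I do not anticipate any real obstacle: Corollary \ref{Cor2} is a direct corollary in the literal sense, the proof being ``apply Theorem \ref{Th2}'' followed by the observation about enumeration and the Gaussian correspondence. If there is any delicate point, it is the mild bookkeeping in passing from the $D_\psi$-indexed convergence to an honest one-parameter sequence $(N_j)$, and the (classical, and already cited) fact that $\mc(N_j)\rightarrow 1$ along an increasing integer sequence yields a weakly mixing rigid system via a Gaussian construction; both are routine. Thus the proof reduces to one line once Theorem \ref{Th2} is in hand.
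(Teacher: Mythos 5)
Your proposal is correct and takes the same route as the paper: Corollary \ref{Cor2} is presented there as an immediate consequence of Theorem \ref{Th2}, obtained exactly as you say by applying the theorem with any fixed $\varepsilon$ and discarding the quantitative bound $\sup|\mc(\mk n_{k'})-1|<\varepsilon$. Your additional remarks on enumeration and the Gaussian correspondence are fine but not needed, since the corollary as stated only asks for the existence of the continuous measure, as you yourself observe.
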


The proof of Theorem \ref{Th1} builds on some ideas from \cite{FT}. While being an immediate consequence of Theorem \ref{Th1}, Corollary \ref{Cor1} admits a direct proof which is very much in the spirit of that of the main result of \cite{FT}. As Corollary \ref{Cor1} is of independent interest in the study of rigidity sequences, we will briefly sketch this direct proof in Section \ref{Secajoutee} of the paper.
\par\smallskip
Theorem \ref{Th3} is obtained by first observing that the set $\{p_{1}^{k_{1}}\dots p_{r}^{k_{r}}\,;\,p_1\ge0,\,\dots ,p_r\ge 0\}$
can be split into $r$ sets to which Theorem \ref{Th2} (or Corollary \ref{Cor2}) applies, and then considering a convex combination of the continuous measures obtained in this way.

\subsection*{Organization of the paper} The paper is structured as follows. We give in Section~\ref{Section 3} the proof 
 of Theorems \ref{Th1} and \ref{Th2}, and sketch in Section \ref{Secajoutee} a direct proof of Corollaries \ref{Cor1} and \ref{Cor2}, essentially following the arguments of Fayad and Thouvenot in \cite{FT}.  
In Section \ref{Section 5}, we recall a characterization of generating \ka\ subsets of $\Z$ from \cite{BG1}, and detail the links between several natural constants involved in this characterization. We explain in particular why the generating subsets
of $\Z$ which satisfy the property stated in (C4) are exactly the \ka\ subsets of $\Z$ with modified \ka\ constant $\sqrt{2}$. 
 Section \ref{Section 6} is devoted to applications: we prove Theorems \ref{Th3} and \ref{Th3bis}, and show how to retrieve many examples of rigidity sequences, using Corollaries \ref{Cor1} and \ref{Cor2}. We also provide an application of Theorem \ref{Th3bis} to the study of the size of the exceptional set of values $\theta \in\R$ for which the sequence $(n_{k}\theta )_{k\ge 0}$ is not almost uniformly distributed modulo $1$ with respect to a (finite) complex Borel measure $\nu\in\mathcal{M}(\T)$, where $(n_{k})_{k\ge 0}$ denotes the Furstenberg sequence. Namely, we show that this exceptional set is uncountable, thus providing a new example of a sublacunary sequence with uncountable exceptional set for (almost) uniform distribution.


\section{Proof of Theorems \ref{Th1} and \ref{Th2}}\label{Section 3}

Given two integers $a<b$, we will when the context is clear denote by $[a,b]$ the set of integers $k$ such that $a\le k\le b$.

\begin{proof}[Proof of Theorem \ref{Th1}]
 Fix $\varepsilon \in(0,1/2)$.
The general strategy of the proof is the following: we construct a sequence $(\lambda _{i})_{i\ge 1}$ of pairwise distinct elements of $C$, as well as a strictly increasing sequence of integers $(N_{p})_{p\ge 0}$ and, for every $p\ge 0$, a sequence $(a_i^{(p)})_{1\le i\le 2^p}$ of positive weights with 
$\sum_{i=1}^{2^{p}} a_{i}^{(p)}=1$,
 such that the probability measures
\[
\mu _{p}=\sum_{i=1}^{2^{p}}a_{i}^{(p)}\dpi{\lambda _{i}}
\]
satisfy certain properties stated below. At step $p$, we determine the elements $\lambda _{i}$ for $2^{p-1}<i\le2^{p}$ as well as the integer $N_{p}$ and the weights 
$a_i^{(p)}$, ${1\le i\le 2^p}$, in such a way that $\lambda _{1}=1$ and $a_1^{(0)}=1$, so that $\mu_0= \dpi{1}$, $N_{0}=0$, and
\medskip
\begin{enumerate}[label=(\arabic*)]
\item \label{reference1} for every $p\ge 1$, every $j\in[0,p-1]$ and every $k\in [N_{j},N_{j+1}]$,
\[
\int_{\T}|\lambda ^{\nk}-1|\,d\mu _{p}(\lambda )<3\varepsilon (1-\varepsilon )^{j};
\]
\medskip
\item\label{reference2} for every $p\ge 1$, every $q\in[0,p-1]$, $l\in[1,2^{p-q})$, $r\in[1,2^{q}]$,
\[
|\lambda _{l2^q+r}-\lambda _{r}|<\eta _{q}
\]
where $\eta _{q}=\dfrac{1}{4}\inf_{1\le i<j\le 2^{q}}|\lambda _{i}-\lambda _{j}|$ for every $q\ge 1$, and $\eta_0=1$;
\medskip
\item\label{reference3} for every $p\ge 1$ and every $k\ge N_{p}$,
\[
\int_{\T}|\lambda ^{\nk}-1|\,d\mu _{p}(\lambda )<\varepsilon (1-\varepsilon )^{p+1};
\]
\medskip
\item\label{reference4} for every $p\ge 1$, every $q\in[1,p]$ and every $r\in[1,2^q]$,
\[
\sum_{\{i\in[1,2^{p}]\,;\,i\equiv r\!\!\!\!\mod 2^{q}\}}\mu _{p}(\{\lambda _{i}\})\le(1-\varepsilon )^{q}.
\]
\medskip
\noindent Remark that property \ref{reference2} implies that the sequence $(\lambda _{i})_{i\ge 1}$ satisfies
\smallskip
\item\label{reference5} for every $q\ge 0$, every $l\ge 0$, and every $r\in[1,2^{q}]$,
\[
|\lambda _{l2^{q}+r}-\lambda _{r}|<\eta _{q},
\]
\medskip
\noindent and that property \ref{reference4} applied to $q=p$ yields that
\smallskip
\item\label{reference6} for every $p\ge 1$ and every $i\in[1,2^p]$,
\[ \mu _{p}(\{\lambda _{i}\})\le(1-\varepsilon )^{p}.
\]
\end{enumerate}
\par\medskip
Suppose that the sequences $(\lambda _{i})_{i\ge 1}$, $(N_{p})_{p\ge 0}$ and $(a_i^{(p)})_{1\le i\le 2^p}$, $p\ge 0$, have been constructed so as to satisfy properties \ref{reference1} to \ref{reference4} above, and let $\mu $ be a $w^{*}$-limit point of the sequence $(\mu _{p})_{p\ge 0}$ in $\mathcal{P}(\T)$. Property \ref{reference1} clearly implies that
$\sup_{k\ge 0}|\mc(\nk)-1|\le 3\varepsilon $.

\begin{claim}\label{Claim 1}
We have ${\mc(\nk)}\rightarrow{1}$ as ${k}\rightarrow{+\infty}$.
\end{claim}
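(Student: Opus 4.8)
The claim to prove is that $\wh{\mu}(n_k) \to 1$ as $k \to +\infty$, where $\mu$ is a $w^*$-limit point of $(\mu_p)$. Since $\wh{\mu}(n_k) = \int_\T \lambda^{n_k}\,d\mu(\lambda)$, it suffices to show that $\int_\T |\lambda^{n_k} - 1|\,d\mu(\lambda) \to 0$, because $|\wh{\mu}(n_k) - 1| = |\int (\lambda^{n_k} - 1)\,d\mu| \le \int |\lambda^{n_k}-1|\,d\mu$. The function $\lambda \mapsto |\lambda^{n_k}-1|$ is continuous on $\T$, so $w^*$-convergence along the relevant subnet transfers each integral estimate from the $\mu_p$'s to $\mu$; in particular, if $(\mu_{p_j})$ is the subnet converging to $\mu$, then $\int_\T |\lambda^{n_k}-1|\,d\mu = \lim_j \int_\T |\lambda^{n_k}-1|\,d\mu_{p_j}$.

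\medskip
The key step is to produce, for each $k$, a uniform bound on $\int_\T |\lambda^{n_k}-1|\,d\mu_p$ valid for all large $p$, that decays as $k \to \infty$. Here is where properties \ref{reference1} and \ref{reference3} combine. Given $k \ge 0$, it lies in exactly one interval $[N_j, N_{j+1}]$ (using $N_0 = 0$), say $k \in [N_{j(k)}, N_{j(k)+1}]$; note $j(k) \to \infty$ as $k \to \infty$ since $(N_p)$ is strictly increasing. For $p \ge j(k)+1$, property \ref{reference1} (applied with this $j = j(k)$, which is $\le p-1$) gives
\[
\int_\T |\lambda^{n_k}-1|\,d\mu_p(\lambda) < 3\varepsilon(1-\varepsilon)^{j(k)}.
\]
Passing to the $w^*$-limit along the subnet (all but finitely many $\mu_{p_j}$ have index $\ge j(k)+1$), we obtain
\[
\int_\T |\lambda^{n_k}-1|\,d\mu(\lambda) \le 3\varepsilon(1-\varepsilon)^{j(k)}.
\]
Since $j(k) \to +\infty$, the right-hand side tends to $0$, hence $|\wh{\mu}(n_k)-1| \le 3\varepsilon(1-\varepsilon)^{j(k)} \to 0$, which is the claim. (One should also double-check the boundary indexing: $k = N_{j+1}$ belongs to two consecutive intervals, but this is harmless — just pick the larger $j$.)

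\medskip
I expect the main subtlety, rather than a genuine obstacle, to be bookkeeping about which $\mu_p$'s satisfy the estimate: property \ref{reference1} is stated only for $j \le p-1$, so for fixed $k$ the bound $3\varepsilon(1-\varepsilon)^{j(k)}$ on $\int |\lambda^{n_k}-1|\,d\mu_p$ is guaranteed only once $p \ge j(k)+1$. Since we take a $w^*$-limit, and the limiting measure depends only on the tail of the sequence $(\mu_p)$, this is exactly enough. One must phrase the limit-point argument carefully with nets (or pass to a subsequence realizing the $w^*$-limit point), but in either formulation the finitely many early terms are irrelevant. No additional properties beyond \ref{reference1} are needed for this claim; property \ref{reference3} and the others enter later in the construction (to ensure the $\mu_p$ can actually be built and that $\mu$ is continuous).
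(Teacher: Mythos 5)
Your proof is correct and is essentially identical to the paper's: both fix $k$, locate it in an interval $[N_{j_k},N_{j_k+1})$ so that property \ref{reference1} gives the uniform bound $3\varepsilon(1-\varepsilon)^{j_k}$ for all large $p$, pass this to the $w^*$-limit using continuity of $\lambda\mapsto|\lambda^{n_k}-1|$, and conclude since $j_k\to+\infty$. Your remarks on the boundary indexing and on which $\mu_p$'s carry the estimate are accurate but inessential refinements of the same argument.
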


\begin{proof}
 For every $k\ge 0$, denote by $j_{k}\ge 0$ the unique integer $j$ such that $k\in[N_{j},N_{j+1})$. For every $p>j_{k}$, we have by \ref{reference1}
 \[
\int_{\T}|\lambda ^{\nk}-1|\,d\mu _{p}(\lambda )<3\varepsilon (1-\varepsilon )^{j_k}\quad \textrm{so that}\quad \int_{\T}|\lambda ^{\nk}-1|\,d\mu (\lambda )\le 3\varepsilon (1-\varepsilon )^{j_k}.
\]
Since ${j_{k}}\rightarrow{+\infty}$ as ${k}\rightarrow{+\infty}$,
${\ds\int_{\T}|\lambda ^{\nk}-1|\,d\mu (\lambda )}\rightarrow {0}$, i.e.\ 
${\mc(\nk)}\rightarrow{1}$.
\end{proof}

\begin{claim}\label{Claim 2}
 The probability measure $\mu $ is continuous.
\end{claim}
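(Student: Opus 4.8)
The plan is to show that $\mu(\{\lambda\}) = 0$ for every $\lambda \in \T$, using the structural properties \ref{reference5} and \ref{reference6} of the atoms $\lambda_i$ and the weights. The key idea is that the atoms $\lambda_i$ cluster in a dyadic-tree fashion: by \ref{reference5}, for each $q\ge 0$ the residue class $r \in [1,2^q]$ determines which small arc around $\lambda_r$ (of radius $\eta_q$) the atom $\lambda_{l2^q+r}$ lies in, and these arcs are pairwise disjoint since $\eta_q \le \frac14 \min_{i<j\le 2^q}|\lambda_i - \lambda_j|$. Meanwhile \ref{reference6} (via \ref{reference4}) controls the total mass of each residue class at level $q$: it is at most $(1-\varepsilon)^q$.

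The steps I would carry out are as follows. First, fix $\lambda \in \T$ and fix $q \ge 1$. The $2^q$ arcs $B(\lambda_r, \eta_q)$ for $r \in [1, 2^q]$ are pairwise disjoint, so $\lambda$ belongs to at most one of them; say $\lambda \in B(\lambda_{r_0}, \eta_q)$ (if it belongs to none, the argument is even easier — a whole neighbourhood of $\lambda$ misses all atoms $\lambda_i$ with $i$ large, forcing $\mu$ to have no mass near $\lambda$). By \ref{reference5}, every atom $\lambda_i$ of every $\mu_p$ with $i \equiv r$ mod $2^q$ and $r \ne r_0$ lies in $B(\lambda_r, \eta_q)$, hence stays a fixed positive distance away from $\lambda$; so only atoms with $i \equiv r_0$ mod $2^q$ can accumulate at $\lambda$. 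Second, I would pass to the $w^*$-limit: for a suitably small closed arc $I$ around $\lambda$ (small enough to avoid $\overline{B(\lambda_r,\eta_q)}$ for all $r \ne r_0$), we get $\mu(I) \le \liminf_p \mu_p(I') $ for slightly larger open $I'$, and $\mu_p(I') \le \sum_{i \equiv r_0 \, (2^q)} \mu_p(\{\lambda_i\}) \le (1-\varepsilon)^q$ by \ref{reference4}. Hence $\mu(\{\lambda\}) \le (1-\varepsilon)^q$.

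Finally, since $q \ge 1$ was arbitrary and $0 < \varepsilon < 1/2$, letting $q \to \infty$ gives $\mu(\{\lambda\}) = 0$. As $\lambda$ was arbitrary, $\mu$ is continuous, i.e. $\mu \in \mathcal{P}_c(\T)$.

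The main obstacle I anticipate is the careful bookkeeping at the $w^*$-limit stage: one must choose the test arc $I$ around $\lambda$ uniformly in $p$ so that it separates $\lambda$ from all the "wrong" clusters $B(\lambda_r, \eta_q)$, $r\ne r_0$, and simultaneously apply the portmanteau-type inequality $\mu(\mathrm{int}\,I) \le \liminf_p \mu_p(\mathrm{int}\,I) \le \limsup_p \mu_p(\overline{I}) $ to transfer the mass bound from the $\mu_p$'s to $\mu$. A minor subtlety is that \ref{reference4} bounds the mass of a residue class only for $q \le p$, so one restricts to $p \ge q$ before taking the limit, which is harmless. Once the geometry of the nested disjoint arcs is set up cleanly, the estimate is immediate and the continuity of $\mu$ follows by letting $q\to\infty$.
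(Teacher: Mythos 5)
Your argument is correct and follows essentially the same route as the paper: the same dyadic clustering via \ref{reference5}, the same pairwise-disjoint arcs of radius comparable to $\eta_q$, the same mass bound $(1-\varepsilon)^q$ on each residue class from \ref{reference4} (restricting to $p\ge q$), and the same portmanteau passage to the $w^*$-limit through a slightly enlarged open arc. The only cosmetic differences are that you bound $\mu(\{\lambda\})\le(1-\varepsilon)^q$ directly for each fixed $\lambda$ (treating the case $\lambda\notin\bigcup_r\Gamma_r$ separately), whereas the paper first notes $\mu\bigl(\bigcup_r\Gamma_r\bigr)=1$ and then derives a contradiction for large $q$.
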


\begin{proof}
Fix $q\ge 1$, and consider for every $r\in[1,2^{q}]$ the two arcs $\Gamma _{r}$ and $\Delta_r$ of $\T$ defined by 
\[
\Gamma _{r}=\{\lambda \in\T\,;\,|\lambda -\lambda _{r}|\le\eta _{q}\}\;\;\textrm{ and }\;\;\Delta _{r}=\{\lambda \in\T\,;\,|\lambda -\lambda _{r}|<\frac{3}{2}\eta _{q}\}.
\]
The $2^{q}$ arcs $\Delta_r$ are pairwise disjoint. Indeed, for every $r,r'\in[1,2^{q}]$ with $r\neq r'$, every $\lambda \in\Delta _{r}$ and every $\lambda '\in \Delta _{r'}$, we have by the definition of $\eta _{q}$ that 
\[
|\lambda -\lambda '|\ge|\lambda _{r}-\lambda _{r'}|-3\eta _{q}\ge 4\eta _{q}-3\eta _{q}=\eta _{q}>0.
\]
So $\Delta _{r}$ and $\Delta_{r'}$ do not intersect.
\par\medskip
Let us next estimate the quantity $\mu _{p}(\Gamma _{r})$  for every $r\in[1,2^{q}]$ and every $p\ge q$. We have
\[
\mu _{p}(\Gamma _{r})=\sum_{\{i\in[1,2^{p}]\,;\,\lambda _{i}\in \Gamma_{r}\}}\mu _{p}(\{\lambda _{i}\}).
\]
Every $i\in[1,2^{p}]$ can be written as $i=l2^{q}+s$ for some $l\ge 0$ and $s\in[1,2^{q}]$. By \ref{reference5}, $\lambda _{i}$ belongs to $\Gamma _{s}$. Since the arcs $\Delta _{r'}$, $r'\in[1,2^{q}]$, are pairwise disjoint, it follows that
\[
\mu _{p}(\Delta _{r})=
\mu _{p}(\Gamma _{r})=\sum_{\{i\in[1,2^{p}]\,;\,i\equiv r\!\!\!\!\mod 2^{q}\}}\mu _{p}(\{\lambda _{i}\})\le(1-\varepsilon )^{q}
\]
by \ref{reference4}.
Also,
\[
 \mu_p\Bigl(\bigcup_{r=1}^{2^q}\Gamma_r\Bigr)=1.
\]
Since the arcs $\Gamma_r$ are closed while the arcs $\Delta_r$ are open, going to the limit as $p$ goes to infinity yields that 
$\mu (\Delta _{r})\le (1-\varepsilon)^{q}$ for every $r\in[1,2^{q}]$ and
\[
 \mu\Bigl(\bigcup_{r=1}^{2^q}\Gamma_r\Bigr)=1.
\]
If $\lambda\in\T$ is such that $\mu(\{\lambda\})>0$, there exists  an $r\in[1,2^q]$ such that $\lambda\in \Gamma_r\subset\Delta_r$. So $\mu(\{\lambda\})\le \mu(\Delta_r)\le  (1-\varepsilon)^{q}$, a contradiction if $q$ is sufficiently large.
 It follows that the measure $\mu $ is continuous.
\end{proof}

By Claims \ref{Claim 1} and \ref{Claim 2}, it suffices to construct $(\lambda _{i})_{i\ge 0}$, $(N_{p})_{p\ge 0}$ and $(a_i^{(p)})_{1\le i\le 2^p}$, $p\ge 0$, satisfying properties \ref{reference1} to \ref{reference4} in order to prove Theorem \ref{Th1}. Recall that for $p=0$, we set $\lambda _{1}=1$, $a_1^{(0)}=1$ and $N_{0}=0$, so that $\mu _{0}=\dpi{1}$.
\par\medskip
For $p=1$, we choose $\lambda _{2}\in C$ distinct from $\lambda _{1}$ with $|\lambda _{2}-\lambda _{1}|<1$
and set $\mu _{1}=
(1-\varepsilon )\dpi{1}+\varepsilon \dpi{\lambda _{2}}$. We have for every $k\ge 0$
\[
\int_{\T}|\lambda ^{\nk}-1|\,d\mu _{1}(\lambda )=\varepsilon |\lambda _{2}^{\nk}-1|\le 2\varepsilon <3\varepsilon .
\]
Hence property \ref{reference1} is satisfied whatever the choice of $N_{1}$. Since $\eta_0=1$ and $|\lambda _{2}-\lambda _{1}|<1$, property \ref{reference2}
is satisfied. We now have to choose $N_{1}$
in such a way that property \ref{reference3} is satisfied. Since $\lambda _{2}$ belongs to $C$,
\[
\int_{\T}|\lambda ^{\nk}-1|\,d\mu _{1}(\lambda )=\varepsilon
{|\lambda _{2}^{\nk}-1|}\rightarrow{0}\quad \textrm{as}\quad  {k}\rightarrow{+\infty},
\]
so we can choose $N_{1}$ so large that 
\[
\int_{\T}|\lambda ^{\nk}-1|\,d\mu _{1}(\lambda )<\varepsilon(1-\varepsilon)^{2}\quad \textrm{for every}\ k\ge N_{1}.
\]
Moreover, $\mu _{1}(\{1\})=1-\varepsilon $ and $\mu _{2}
(\{\lambda _{2}\})=\varepsilon <1-\varepsilon $, so \ref{reference4}, which we only need to check for $q=p=1$, is true.
This terminates the construction for $p=1$.
\par\medskip
Suppose now that the construction has been carried out until step $p$, i.e.\ that the quantities $\lambda _{i}$, $i\in[1,2^{p}]$, 
$(a_i^{(l)})_{1\le i\le 2^l}$, and $N_{l}$, $l\in[0,p]$,
 have been constructed satisfying properties \ref{reference1} to \ref{reference4}. 
\par\medskip
We construct by induction on $s\in[1,2^{p}]$ elements $\lambda _{2^{p}+s}$ of $C$, measures $\mu _{p,s}\in\mathcal{P}(\T)$
of the form
\[\mu _{p,s}=
\sum_{i=1}^{2^p+s}b_i^{(p,s)}\,\dpi{\lambda _{i}}\quad \textrm{with} \quad b_i^{(p,s)}>0\quad \textrm{and} \quad \sum_{i=1}^{2^p+s}b_i^{(p,s)}=1,
\]
and integers $N_{p,s}$ in such a way that  the elements $\lambda _{i}$, $i\in[1,2^{p+1}]$, are all distinct, $N_{p}<N_{p,1}<\dots<N_{p,2^{p}}$, and the following five properties are satisfied:
\medskip
\begin{enumerate}[label=(\alph*)]
\item \label{referencea} for every $j\in[0,p-1]$ and every $k\in[N_{j},N_{j+1}]$,
\[
\int_{\T}|\lambda ^{\nk}-1|\,d\mu _{p,s}(\lambda )<3\varepsilon (1-\varepsilon )^{j};
\]
\item \label{referenceb} for every $k\ge N_{p}$,
\[
\int_{\T}|\lambda ^{\nk}-1|\,d\mu _{p,s}(\lambda )<3\varepsilon (1-\varepsilon )^{p};
\]
\item \label{referencec} for every $k\ge N_{p,s}$,
\[
\int_{\T}|\lambda ^{\nk}-1|\,d\mu _{p,s}(\lambda )<3\varepsilon (1-\varepsilon )^{p+2};
\]
\item \label{referenced} $\mu _{p,s}(\{\lambda _{i}\})=\mu _{p}(\{\lambda _{i}\})$ for every $i\in(s,2^{p}]$ and $$\mu _{p,s}(\{\lambda _{i}\})+\mu _{p,s}(\{\lambda _{2^p+i}\})=\mu _{p}(\{\lambda _{i}\}) \quad\textrm{for every } i\in[1,s];$$
\item \label{referencee} $\mu _{p,s}(\{\lambda _{i}\})\le (1-\varepsilon )^{p+1}$
for every $i\in[1,s]\cup [2^{p}+1,2^{p}+s]$.
\end{enumerate}
\par\medskip

\addtocounter{enumi}{1}
  \setcounter{equation}{\value{enumi}}

Let us start with the construction of $\lambda _{2^{p}+1}$. By density of $C$, one can choose $\lambda _{2^{p}+1}$ distinct from all the elements $\lambda _{i}$, $i\in[1,2^{p}]$, with 
$|\lambda _{2^{p}+1}-\lambda _{1}|$ arbitrarily small. We define $\mu _{p,1}$ as 
\begin{align*}
\mu _{p,1}&=\mu _{p}+\mu _{p}(\{1\})\,\varepsilon\, (\dpi{\lambda _{2^{p}+1}}-\dpi{\lambda _{1}}) \\
&= \mu _{p}(\{1\})\,(1-\varepsilon)\,\dpi{\lambda _{1}}+\sum_{i=2}^{2^p}\mu _{p}(\{\lambda_i\})\,\dpi{\lambda _{i}}+\mu _{p}(\{1\})\,\varepsilon\, \dpi{\lambda _{2^{p}+1}}.
\end{align*}
In other words, we split the point mass $\dpi{\lambda _{1}}$ appearing in the expression of $\mu _{p}$ into $(1-\varepsilon)\dpi{\lambda _{1}}+\varepsilon\dpi{\lambda _{2^{p}+1}}$.
We have for every $k\ge 0$
\begin{align}\label{Eq5}
 \int_{\T}|\lambda ^{\nk}-1|\,d\mu _{p,1}(\lambda )&\le\int_{\T}|\lambda ^{\nk}-1|\,d\mu _{p}(\lambda )+\mu _{p}(\{1\})\,\varepsilon\, |\lambda _{2^{p}+1}^{\nk}-\lambda _{1}^{\nk}|\\
 &\le\int_{\T}|\lambda ^{\nk}-1|\,d\mu _{p}(\lambda )+(1-\varepsilon )^{p}\,\varepsilon\, |\lambda _{2^{p}+1}^{\nk}-\lambda _{1}^{\nk}|
\notag
\end{align}
since 
$\mu _{p}(\{1\})\le (1-\varepsilon )^{p}$ by \ref{reference6}.
If $|\lambda _{2^{p}+1}-\lambda _{1}|$ is sufficiently small, we have by \ref{reference1} that 
\[
\int_{\T}|\lambda ^{\nk}-1|\,d\mu _{p,1}(\lambda )<3\varepsilon (1-\varepsilon )^{j}
\]
for every $j\in[0,p-1]$ and every $k\in[N_{j},N_{j+1}]$ (the set of pairs of integers $(j,k)$ with 
$j\in [0,p-1]$ and $k\in[N_{j},N_{j+1}]$ is finite).
So \ref{referencea} holds true. Also, (\ref{Eq5}) and \ref{reference3} imply that for every $k\ge N_{p}$,
\[
\int_{\T}|\lambda ^{\nk}-1|\,d\mu _{p,1}(\lambda )<\varepsilon \,(1-\varepsilon )^{p+1}+2\varepsilon \,(1-\varepsilon )^{p}<3\varepsilon \,(1-\varepsilon )^{p}
\]
so that \ref{referenceb} holds true.
Since all the elements $\lambda _{i}$, $i\in[1,2^{p}+1]$, belong to $C$, there exists $N_{p,1}>N_{p}$ such that 
\[
\int_{\T}|\lambda ^{\nk}-1|\,d\mu _{p,1}(\lambda )<3\varepsilon (1-\varepsilon )^{p+2}\quad \textrm{for every}\ k\ge N_{p,1}.
\]
Property \ref{referenced} is clear from the expression of $\mu _{p,1}$, and property \ref{referencee} is satisfied since $\mu _{p,1}(\{1\})=\mu _{p}(\{1\})\,(1-\varepsilon)\le (1-\varepsilon)^{p+1}$ and
$\mu _{p,1}(\{\lambda _{2^{p}+1}\})=\mu _{p}(\{1\})\,\varepsilon
\le \varepsilon\, (1-\varepsilon)^{p}\le (1-\varepsilon)^{p+1}$ by \ref{reference6}.
Properties \ref{referencea} to \ref{referencee} are thus satisfied for $s=1$.
\par\smallskip
Suppose now that 
$\lambda _{2^{p}+s'}$, $\mu _{2^{p}+s'}$, and $N_{2^{p}+s'}$ have been constructed for $s'<s$. Let $\lambda _{2^{p}+s}\in C\setminus\{\lambda _{1},\dots,\lambda _{2^{p}+s-1}\}$ be very close to $\lambda _{s}$, and set
\begin{equation}\label{qqch}
 \mu _{p,s}=\mu _{p,s-1}+\mu _{p,s-1}(\{\lambda _{s}\})\,\varepsilon \,(\dpi{\lambda _{2^{p}+s}}-\dpi{\lambda _{s}}).
\end{equation}
This time, the  point mass $\dpi{\lambda _{s}}$ appearing in $\mu _{p}$ is split as
$(1-\varepsilon)\dpi{\lambda _{s}}+\varepsilon\dpi{\lambda _{2^{p}+s}}$. Since, by \ref{reference6}, 
\begin{equation}\label{Eq6}
  \int_{\T}|\lambda ^{\nk}-1|\,d\mu _{p,s}(\lambda )\le \int_{\T}|\lambda ^{\nk}-1|\,d\mu _{p,s-1}(\lambda )+(1-\varepsilon )^{p}\,\varepsilon \,|\lambda ^{\nk}_{2^{p}+s}-\lambda ^{\nk}_{s}|,
\end{equation}
for every $k\ge 0$,
the induction assumption implies that \ref{referencea} holds true provided $|\lambda _{2^{p}+s}-\lambda _{s}|$ is sufficiently small. As to \ref{referenceb}, we have to consider separately the cases $N_{p}\le k< N_{p,s-1}$ and $k\ge N_{p,s-1}$. If $|\lambda _{2^{p}+s}-\lambda _{s}|$ is sufficiently small, we have by (\ref{Eq6}) and \ref{referenceb} for $s-1$ that
\[
\int_{\T}|\lambda ^{\nk}-1|\,d\mu _{p,s}(\lambda )<3\varepsilon \,(1-\varepsilon )^{p}\quad \textrm{for every}\ N_{p}\le k< N_{p,s-1}.
\]
By property \ref{referencec} at step $s-1$ and (\ref{Eq6}),
\[
\int_{\T}|\lambda ^{\nk}-1|\,d\mu _{p,s}(\lambda )<\varepsilon \,(1-\varepsilon )^{p+2}+2\varepsilon \,(1-\varepsilon )^{p}<3\varepsilon \,(1-\varepsilon )^{p}
\]
for every $k\ge N_{p,s-1}$. Hence \ref{referenceb} is satisfied at step $s$. Property \ref{referencec} is satisfied if $N_{p,s}$ is chosen sufficiently large since all the elements $\lambda _{i}$, $i\in[1,2^{p}+s]$, belong to $C$. 
\par\smallskip
Property \ref{referenced} follows from (\ref{qqch}) and property \ref{referenced} at step $s-1$.
Indeed, $\mu _{p,s}(\{\lambda _{i}\})=\mu _{p,s-1}(\{\lambda _{i}\})$ for every $i\not \in\{s,2^{p}+s \}$. Also,
$\mu _{p,s-1}(\{\lambda _{i}\})=\mu _{p}(\{\lambda _{i}\})$ for every $i\in [s,2^p]$, so that $\mu _{p,s}(\{\lambda _{i}\})=\mu _{p}(\{\lambda _{i}\})$ for every $i\in (s,2^p]$. Observe next that $\mu _{p,s}(\{\lambda _{i}\})+\mu _{p,s}(\{\lambda _{2^p+i}\})=\mu _{p,s-1}(\{\lambda _{i}\})+\mu _{p,s-1}(\{\lambda _{2^p+i}\})=
\mu _{p}(\{\lambda _{i}\})$ for every $i\in[1,s-1]$. Lastly,
$\mu _{p,s}(\{\lambda _{s}\})+\mu _{p,s}(\{\lambda _{2^p+s}\})=\mu _{p,s-1}(\{\lambda _{s}\})=\mu _{p}(\{\lambda _{s}\})$. So property \ref{referenced} is true at step $s$.
\par\smallskip
As to property \ref{referencee}, we have 
$\mu _{p,s}(\{\lambda _{i}\})=\mu _{p,s-1}(\{\lambda _{i}\})$ for every $i\not \in\{s,2^{p}+s \}$. So $\mu _{p,s}(\{\lambda _{i}\})\le(1-\varepsilon )^{p+1}$ for every $i\in[1,s)\cup [2^{p}+1,2^{p}+s)$. Also
\[
\mu _{p,s}(\{\lambda _{s}\})=\mu _{p,s-1}(\{\lambda _{s}\})\,(1-\varepsilon )=\mu _{p}(\{\lambda _{s}\})\,(1-\varepsilon )\le (1-\varepsilon )^{p+1}
\]
by \ref{reference6},
while $\mu _{p,s}(\{\lambda _{2^{p}+s}\})=\mu _{p,s-1}(\{\lambda _{s}\})\,\varepsilon \le(1-\varepsilon )^{p+1}$, again by \ref{reference6}. So \ref{referencee} holds true at step $s$. This terminates the construction of the measures $\mu _{p,s}$.
\par\medskip
Let us now set $\mu _{p+1}=\mu _{p,2^{p}}$ and $N_{p+1}=N_{p,2^{p}}$. It remains to check that with these choices of $\lambda _{i}$, $i\in[1,2^{p+1}]$, $\mu _{p+1}$ and $N_{p+1}$, properties \ref{reference1} to \ref{reference4} are satisfied.
\par\medskip
By \ref{referencea}, property \ref{reference1} is satisfied for every $j\in[0,p-1]$. The case where $j=p$ follows from \ref{referenceb}. So \ref{reference1} is true. Property \ref{reference3} follows immediately from \ref{referencec}.
Property \ref{reference4} is a consequence of \ref{referenced} and \ref{referencee}. Indeed, suppose first that $q\in[1,p]$. Then
\begin{align*}
 \sum_{\{i\in[1,2^{p+1}]\,;\,i\equiv r\!\!\!\!\mod 2^{q}\}}\mu _{p+1}(\{\lambda _{i}\})
&=\sum_{\{i\in[1,2^{p}]\,;\,i\equiv r\!\!\!\!\mod 2^{q}\}}\left(\mu _{p+1}(\{\lambda _{i}\})+\mu _{p+1}(\{\lambda _{2^p+i}\})\right)\\
&=\sum_{\{i\in[1,2^{p}]\,;\,i\equiv r\!\!\!\!\mod 2^{q}\}}\mu _{p}(\{\lambda _{i}\})
\le(1-\varepsilon )^{q}
\end{align*}
by \ref{referenced} above and \ref{reference4} at step $p$. If $q=p+1$, \ref{reference4} follows immediately from \ref{referencee}.
So it only remains to check \ref{reference2}.
\par\medskip
Fix $q\in[0,p]$, $l\in[1,2^{p+1-q})$ and $r\in[1,2^{q}]$. Consider first the case where $q=p$. In this case $l=1$, and the quantities under consideration have the form $|\lambda _{2^{p}+r}-\lambda _{r}|$, with $r\in[1,2^{p}]$. One can ensure in the construction that $|\lambda _{2^{p}+r}-\lambda _{r}|<\eta _{p}$ for every $r\in[1,2^{p}]$ and then \ref{reference2} holds true for $q=p$.
\par\medskip
Suppose then that $q\in[0,p-1]$, and write $l$ as $l=l'+\varepsilon 2^{p-q}$ with $\varepsilon \in\{0,1\}$ and $l'\in[1,2^{p-q})$. Then $l2^{q}+r=l'2^{q}+r+\varepsilon 2^{p} $. Set $s=l'2^{q}+r$. Then $1\le s\le (2^{p-q}-1)2^{q}+2^{q}=2^{p}$, i.e. 
$s\in[1,2^{p}]$. We have
\[
|\lambda _{l2^{q}+r}-\lambda _{r}|\le|\lambda _{s+\varepsilon 2^{p}}-\lambda _{s}|+|\lambda _{l'2^{q}+r}-\lambda _{r}|.
\]
If $\varepsilon =0$, the first term is zero; if $\varepsilon =1$, it is equal to 
$|\lambda _{2^{p}+s}-\lambda _{s}|$, which can be assumed to be as small as we wish in the construction. As to the second term, it is less  than $\eta _{q}$ by property \ref{reference2} at step $p$, since $l'\in[1,2^{p-q})$ and $r\in[1,2^{q}]$ with $q\in[0,{p-1}]$. We can thus ensure that
\[
|\lambda _{l2^{q}+r}-\lambda _{r}|<\eta _{q}
\]
for every $q\in[0,p]$, $l\in[1,2^{p+1-q})$, and $r\in[1,2^{q}]$. So property \ref{reference2} is satisfied at step $p+1$, and this concludes the proof of Theorem \ref{Th1}.
\end{proof}

Theorem \ref{Th2} is now a formal consequence of Theorem \ref{Th1}.

\begin{proof}[Proof of Theorem \ref{Th2}] Recall that $
D_\psi=\{(k,k')\in\N^{2}\, ;\,0\le k'\le \psi(k)\}$
  and
  \[
C'_{\psi}=\{\lambda \in\T\ ;{\lambda ^{\mk n_{k'}}\rightarrow 1}\ \textrm{as}\ {k\rightarrow +\infty}, \;(k,k')\in D_\psi\}.
\]
 Order the set $\{m_k n_{k'}\textrm{ ; } (k,k')\in D_{\psi}\}$ as a strictly increasing sequence $(p_l)_{l\ge 0}$ of integers. Since there exists for every integer $k_1\ge 0$ an integer $l_1\ge 0$ such that $$\{p_l\textrm{ ; }l\ge l_1\}\subseteq\{m_k n_{k'}\textrm{ ; } (k,k')\in D_{\psi},\; k\ge k_1\},$$ every element $\lambda\in C'_\psi$ has the property that 
${\lambda ^{p_l}\rightarrow 1}\ \textrm{as}\ {l\rightarrow +\infty}$. By Theorem \ref{Th1} applied to the sequence $(p_l)_{l\ge 1}$, there exists for every $\varepsilon>0$ a measure $\mu\in\mathcal{P}_c(\T)$ such that ${\mc(p_l)\rightarrow 1}$
as ${l\rightarrow +\infty}$ and
$\sup_{l\ge 0}|\mc(p_l)-1|<\varepsilon $. Then
\[\sup_{k\ge0,\;0\le k'\le \psi(k)}|\mc(\mk n_{k'})-1|<\varepsilon .\]
 Using this time the fact that there exists for every integer $l_2\ge 0$ an integer $k_2\ge 0$ such that $$\{m_k n_{k'}\textrm{ ; } (k,k')\in D_{\psi},\; k\ge k_2\}\subseteq\{p_l\textrm{ ; }l\ge l_2\},$$ we deduce that ${\mc(\mk n_{k'})\rightarrow 1}$ as ${k\rightarrow +\infty}$ with $(k,k')\in D_{\psi}$. Theorem \ref{Th2} is proved.
\end{proof}

\section{A direct proof of Corollaries \ref{Cor1} and \ref{Cor2}}\label{Secajoutee}
We sketch in this section a direct proof of Corollary \ref{Cor1} (Corollary \ref{Cor2} is a formal consequence of it), following almost step by step the construction given in \cite{FT} and bypassing the additional technical difficulties of the proof of Theorem \ref{Th1}.

\begin{proof} Using the notation of the proof of Theorem \ref{Th1}, 
we construct a sequence $(\lambda _{i})_{i\ge 1}$ of pairwise distinct elements of $C$, as well as a strictly increasing sequence of integers $(N_{p})_{p\ge 0}$, such that the measures
\[
\mu _{p}=2^{-p}\sum_{i=1}^{2^{p}}\dpi{\lambda _{i}},\quad p\ge 0
\]
satisfy 
\medskip
\begin{enumerate}[label=(\arabic*')]
\item \label{reference1'} for every $p\ge 1$, every $j\in[0,p-1]$ and every $k\in[N_{j},N_{j+1}]$,
\[
\int_{\T}|\lambda ^{\nk}-1|\,d\mu _{p}(\lambda )<2^{-(j-1)};
\]
\medskip
\item \label{reference2'} for every $p\ge 1$, every $q\in[0,p-1]$, $l\in[1,2^{p-q})$, $r\in[1,2^{q}]$,
\[
|\lambda _{l2^q+r}-\lambda _{r}|<\eta _{q}
\]
where $\eta _{q}=\dfrac{1}{4}\inf_{1\le i<j\le 2^{q}}|\lambda _{i}-\lambda _{j}|$ for every $q\ge 1$, and $\eta_0=1$;
\medskip
\item \label{reference3'} for every $p\ge 1$ and every $k\ge N_{p}$,
\[
\int_{\T}|\lambda ^{\nk}-1|\,d\mu _{p}(\lambda )<2^{-(p+1)}.
\]
\medskip
\noindent Again, property \ref{reference2'} implies that
\smallskip
\item \label{reference4'} for every $q\ge 0$, every $l\ge 0$, and every $r\in[1,2^{q}]$,
\[
|\lambda _{l2^{q}+r}-\lambda _{r}|<\eta _{q}.
\]
\end{enumerate}
\par\medskip
Then an argument similar to the one given in the proof of Theorem \ref{Th1} shows that any $w^{*}$-limit point $\mu $ of $(\mu _{p})_{p\ge 0}$ will be a continuous measure which satisfies
${\mc(\nk)}\rightarrow{1}$ as ${k}\rightarrow{+\infty}$. 
\par\smallskip
For $p=0$, we set $\lambda _{1}=1$, $N_{0}=0$, and $\mu _{0}=\dpi{1}$.
For $p=1$, we choose $\lambda _{2}\in C\setminus\{\lambda _{1}\}$ with $|\lambda _{2}-\lambda _{1}|<1$ and set
$\mu _{1}=\frac{1}{2}(\dpi{1}+\dpi{\lambda _{2}})$. We have
\[
\int_{\T}|\lambda ^{\nk}-1|\,d\mu _{1}(\lambda )=\dfrac{1}{2}\,|\lambda _{2}^{\nk}-1|\le 1<2\quad \textrm{for every}\ k\ge 0.
\]
Hence property \ref{reference1'} is satisfied whatever the choice of $N_{1}$.
Since $|\lambda _{2}-\lambda _{1}|<1$, \ref{reference2'} is true.
If $N_{1}$ is chosen sufficiently large, $\mu _{1}$ satisfies \ref{reference3'}.
\par\medskip
Suppose now that the construction has been carried out until step $p$.
We can then construct by induction on $s\in[1,2^{p}] $ measures $\mu _{p,s}$ which satisfy
\medskip 
\begin{enumerate}[label=(\alph*')]
\item \label{referencea'} every $j\in[0,p-1]$ and every $k\in[N_{j},N_{j+1}]$,
\[
\int_{\T}|\lambda ^{\nk}-1|\,d\mu _{p,s}(\lambda )<2^{-(j-1)};
\]
\item \label{referenceb'}for every $k\ge N_{p}$,
\[
\int_{\T}|\lambda ^{\nk}-1|\,d\mu _{p,s}(\lambda )<2^{-(p-1)};
\]
\item \label{referencec'} for every $k\ge N_{p,s}$,
\[
\int_{\T}|\lambda ^{\nk}-1|\,d\mu _{p,s}(\lambda )<2^{-(p+2)}.
\]
\end{enumerate}
\par\medskip
We define $\mu _{p,1}$ as 
\begin{align*}
\mu _{p,1}&=\mu _{p}+2^{-(p+1)}\bigl (\dpi{\lambda _{2^{p}+1}}-\dpi{\lambda _{1}} \bigr)
\end{align*}
 where $\lambda _{2^{p}+1}\in C\setminus\{\lambda _{1},\dots,\lambda _{2^{p}}\}$ is such that $|\lambda _{2^{p}+1}-\lambda _{1}|$ is very small. Then for every $k\ge 0$,
\begin{equation}\label{Eq9}
  \int_{\T}|\lambda ^{\nk}-1|\,d\mu _{p,1}(\lambda )\le\int_{\T}|\lambda^{\nk}-1|\,d\mu _{p}(\lambda )+2^{-(p+1)}\,|\lambda ^{\nk}_{2^{p}+1}-\lambda _{1}^{\nk}|.
\end{equation}
It follows that \ref{referencea'} holds true for $\mu _{p,1}$, provided that 
$|\lambda _{2^{p}+1}-\lambda _{1}|$ is sufficiently small. Also, we have by (\ref{Eq9}) and \ref{reference3'} that for every $k\ge N_{p}$,
\[
\int_{\T}|\lambda ^{\nk}-1|\,d\mu _{p,1}(\lambda )<2^{-(p+1)}+2^{-p}<2^{-(p-1)}
\]
which is \ref{referenceb'}. If $N_{p,1}$ is sufficiently large, \ref{referencec'} is true.
\par\medskip
Supposing now that $s\ge 2$ and that the construction has been carried out for every $s'<s$, we choose $\lambda _{2^{p}+s}\in C\setminus\{\lambda _{1},\dots,\lambda _{2^{p}+s-1}\}$ very close to $\lambda _{s}$, and set
\[
\mu _{p,s}=\mu _{p,s-1}+2^{-(p+1)}\bigl (\dpi{\lambda _{2^{p}+s}}-\dpi{\lambda _{s}} \bigr).
\]
Since, for every $k\ge 0$,
\begin{equation}\label{Eq10}
  \int_{\T}|\lambda ^{\nk}-1|\,d\mu _{p,s}(\lambda )\le \int_{\T}|\lambda ^{\nk}-1|\,d\mu _{p,s-1}(\lambda )+2^{-(p+1)}|\lambda ^{\nk}_{2^{p}+s}-\lambda ^{\nk}_{s}|,
\end{equation}
the induction assumption implies that \ref{referencea'} holds true provided $|\lambda _{2^{p}+s}-\lambda _{s}|$ is sufficiently small. As to \ref{referenceb'}, we consider separately the cases $N_{p}\le k< N_{p,s-1}$ and $k\ge N_{p,s-1}$. If $|\lambda _{2^{p}+s}-\lambda _{s}|$ is sufficiently small,
\[
\int_{\T}|\lambda ^{\nk}-1|\,d\mu _{p,s}(\lambda )<2^{-(p-1)}\quad \textrm{for every}\ N_{p}\le k<N_{p,s-1}.
\]
By property \ref{referencec'} at step $s-1$ and (\ref{Eq10}),
\[
\int_{\T}|\lambda ^{\nk}-1|\,d\mu _{p,s}(\lambda )<2^{-(p+2)}+2^{-p}<2^{-(p-1)}
\]
for every $k\ge N_{p,s-1}$. Hence \ref{referenceb'} is satisfied at step $s$. Property \ref{referencec'} is satisfied if $N_{p,s}$ is chosen sufficiently large.
This terminates the construction of the measures $\mu _{p,s}$.
\par\smallskip
We then set $\mu _{p+1}=\mu _{p,2^{p}}$ and $N_{p+1}=N_{p,2^{p}}$ and check as in the proof of Theorem \ref{Th1} that properties \ref{reference1'}, \ref{reference2'}, and \ref{reference3'} are satisfied.
\end{proof}

  \par\smallskip

\begin{remark}\label{Remark 6}
 Suppose that the set 
  \[
C'=\{\lambda \in\T\ ;{\lambda ^{\mk n_{k'}}\rightarrow 1}\ \textrm{as}\ {k \rightarrow +\infty}\ \textrm{uniformly in}\ k'\}
\]
is dense in $\T$. 
 It is natural to wonder whether there exists a measure $\mu \in\mathcal{P}_{c}(\T)$ such that ${\mc(\mk n_{k'})}\rightarrow{1}$ as 
 ${k}\rightarrow{+\infty}$ uniformly in $k'$. The following example shows that it is not the case: set $\mk=2^{k}$ and $n_{k'}=k'$ for every $k,k'\ge 0$. The set
 \[
C'=\{\lambda \in\T\,;\,{\lambda ^{\mk n_{k'}}}\rightarrow{1}\ \textrm{as}\ {k}\rightarrow{+\infty}\ \textrm{uniformly in}\ k'\}
\]
contains all $2^{k}$-th roots of $1$, and so is dense in $\T$. Suppose that $\mu \in\mathcal{P}(\T)$ is such that ${\mc(2^{k}k')}\rightarrow{1}$ as ${k}\rightarrow{+\infty}$ uniformly in $k'$. Then there exists an integer $k_{0}\ge 1$ such that $|\mc(2^{k_{0}}k')|\ge 1/2$ for every $k'\ge 0$. Consider 
the measure $\nu =T_{2^{k_{0}}}(\mu) $. Since $\wh{\nu }(n)=\mc(2^{k_{0}}n)$ for every $n\in\Z$, $\nu$ cannot be continuous. Also, $\nu (\{\lambda _0\})=\mu (\{\lambda\in\T\textrm{ ; } \lambda^{2^{k_0}}=\lambda_0\})$ for every $\lambda_0 \in\T$, and so the measure $\mu $ itself cannot be continuous.
\par\smallskip
So the conclusion of Corollary \ref{Cor2} seems to be essentially optimal.
\end{remark}

\section{From Conjecture (C4) to the study of some non-Kazhdan subsets of $\mathbb{Z}$}\label{Section 5}
\subsection{\ka\ constants and Fourier coefficients of probability measures}
We begin this section by recalling a characterization of generating \ka\ subsets of $\Z$, obtained in \cite[Th.\,6.1]{BG1} (see also \cite[Th.\,4.12]{BG2}) and presenting some facts concerning the (modified) \ka\ constants of such sets. We state it here in a slightly modified way (condition (ii) is not exactly the same as in \cite[Th.\,4.12]{BG2}), and include a discussion concerning the links between the various constants appearing in the equivalent conditions. 

\begin{theorem}\label{Th B}
Let $Q$ be a generating subset of $\Z$. Then $Q$ is a \ka\ subset of $\Z$ if and only if one of the following equivalent assertions holds true:
\begin{enumerate}
\item[\emph{(i)}] there exists $\varepsilon \in(0,\sqrt{2})$ such that $(Q,\varepsilon )$ is a modified \ka\ pair. Equivalently, $\Kct(Q)\ge \varepsilon$;
\item[\emph{(ii)}] there exists $\gamma \in(0,1)$ such that any measure $\mu \in\mathcal{P}(\T)$ with $\sup_{n\in Q}(1-\Re e\,\wh{\mu }(n))<\gamma $ has a discrete part;
\item[\emph{(iii)}] there exists $\delta \in(0,1)$ such that any measure $\mu \in\mathcal{P}(\T)$ with $\inf_{n\in Q}|\wh{\mu }(n)|>\delta $ has a discrete part.
\end{enumerate}
Moreover:
\par\smallskip
-- \emph{(i)} is satisfied for $\varepsilon \in(0,\sqrt{2})$ if and only if \emph{(ii)} is satisfied for $\gamma =\varepsilon ^{2}/2$;
\par\smallskip
-- if \emph{(ii)} is satisfied for $\gamma\in(0,1)$, \emph{(iii)} is satisfied for $\delta=\sqrt{1-\gamma}$, while if \emph{(iii)} is satisfied for $\delta\in(0,1)$, \emph{(ii)} is satisfied for $\gamma={1-\delta}$;
\par\smallskip
-- hence if \emph{(i)} is satisfied for $\varepsilon \in(0,\sqrt{2})$,
 \emph{(iii)} is satisfied for $\delta =\sqrt{\strut 1-\varepsilon ^{2}/2}$ , while if \emph{(iii)} is satisfied for $\delta\in(0,1)$, \emph{(i)} holds true for $\varepsilon =\sqrt{2(1-\delta)}$.
\end{theorem}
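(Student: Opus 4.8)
The plan is to reduce everything to two clean dictionaries: one between unitary operators without eigenvalues and continuous measures (via the spectral theorem), and one between the quantities $\|V^n x - x\|$, $1-\Re e\,\wh{\mu}(n)$, and $|\wh{\mu}(n)|$. For the first dictionary, I would recall that if $V$ is a unitary operator on a separable Hilbert space $H$ and $x\in H$ has $\|x\|=1$, then by the spectral theorem there is a probability measure $\mu_x$ on $\T$ (the spectral measure of $x$) with $\pss{V^n x}{x} = \wh{\mu_x}(n)$ for all $n\in\Z$; moreover $V$ has no eigenvalue with eigenvector in the cyclic subspace generated by $x$ precisely when $\mu_x$ is continuous, and $V$ has no eigenvalue at all iff every spectral measure is continuous. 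Conversely, given $\mu\in\mathcal{P}(\T)$, multiplication by the coordinate function on $L^2(\T,\mu)$ is a unitary $V$ with cyclic vector $\mathbb{1}$ whose spectral measure is $\mu$, and $V$ has an eigenvalue iff $\mu$ has an atom. This turns ``$(Q,\varepsilon)$ is a modified Kazhdan pair'' into the statement ``every $\mu\in\mathcal{P}(\T)$ with $\sup_{n\in Q}\|V^n\mathbb{1}-\mathbb{1}\|_{L^2(\mu)} < \varepsilon$ has an atom'', and a density/approximation argument reduces the general $H$ case to the cyclic case.

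Next I would set up the elementary numerical identities. From $\|V^n x - x\|^2 = 2 - 2\Re e\,\pss{V^n x}{x} = 2(1 - \Re e\,\wh{\mu_x}(n))$ one gets
\[
\sup_{n\in Q}\|V^n x - x\|^2 = 2\sup_{n\in Q}(1 - \Re e\,\wh{\mu_x}(n)),
\]
which immediately gives the equivalence of (i) for a given $\varepsilon\in(0,\sqrt 2)$ with (ii) for $\gamma = \varepsilon^2/2$, and identifies $\Kct(Q)^2 = 2\inf\{\sup_{n\in Q}(1-\Re e\,\wh\mu(n))\}$, the infimum over continuous $\mu\in\mathcal{P}(\T)$. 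For the link between (ii) and (iii), the passage from $\Re e\,\wh\mu$ to $|\wh\mu|$ uses the standard symmetrization trick already introduced in the paper: given $\mu$, the measure $\nu = \mu * \ti\mu$ satisfies $\wh\nu(n) = |\wh\mu(n)|^2 \ge 0$, and $\nu$ is continuous whenever $\mu$ is. So if (iii) fails, i.e.\ there is a continuous $\mu$ with $\inf_{n\in Q}|\wh\mu(n)| > \delta$, then $\nu$ is continuous with $\Re e\,\wh\nu(n) = |\wh\mu(n)|^2 > \delta^2$, hence $\sup_{n\in Q}(1-\Re e\,\wh\nu(n)) < 1-\delta^2$; taking $\delta^2 = 1-\gamma$ shows (ii) for $\gamma$ implies (iii) for $\delta = \sqrt{1-\gamma}$. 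For the reverse direction, if (ii) fails there is a continuous $\mu$ with $\sup_{n\in Q}(1-\Re e\,\wh\mu(n)) < \gamma$, so $\Re e\,\wh\mu(n) > 1-\gamma$ and a fortiori $|\wh\mu(n)| \ge \Re e\,\wh\mu(n) > 1-\gamma$; thus (iii) for $\delta = 1-\gamma$ implies (ii) for $\gamma$. The composite relations between (i) and (iii) then follow by substituting $\gamma = \varepsilon^2/2$.

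Finally, to close the circle of equivalences with ``$Q$ is a Kazhdan set'', I would invoke Theorem \ref{Th BB} (quoted from \cite{BG1}) exactly as stated: for generating $Q$, being a Kazhdan subset of $\Z$ is equivalent to the existence of some $\varepsilon'\in(0,\sqrt 2]$ for which $(Q,\varepsilon')$ is a modified Kazhdan pair, which is condition (i); note one must check the harmless boundary case $\varepsilon' = \sqrt 2$, which simply says $\Kct(Q) = \sqrt 2 > 0$ and is covered since $\Kct(Q) > 0$ iff $\Kc(Q) > 0$ iff $Q$ is Kazhdan. I expect the main obstacle to be purely bookkeeping rather than conceptual: carefully tracking the strict-versus-non-strict inequalities through the substitutions $\gamma = \varepsilon^2/2$, $\delta = \sqrt{1-\gamma}$, etc., so that each ``is satisfied for'' claim holds with the precise constant asserted, and making sure the reduction from general Hilbert-space representations to the cyclic (measure) case does not lose a constant. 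The inequality $|\wh\mu(n)| \ge \Re e\,\wh\mu(n)$ is one-directional, which is exactly why the two passages between (ii) and (iii) use different constants, and this asymmetry should be flagged explicitly.
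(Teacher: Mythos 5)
Your proposal is correct and follows essentially the same route as the paper: the multiplication-operator/spectral-measure dictionary together with the identity $\|U^{n}x-x\|^{2}=2(1-\Re e\,\wh{\mu}(n))$ for (i)$\Leftrightarrow$(ii), the symmetrization $\nu=\mu*\ti{\mu}$ for (ii)$\Rightarrow$(iii), the inequality $|\wh{\mu}(n)|\ge\Re e\,\wh{\mu}(n)$ for (iii)$\Rightarrow$(ii), and Theorem~\ref{Th BB} to link these to Kazhdan-ness. Your contrapositive phrasing (via ``$\mu$ continuous implies $\mu*\ti{\mu}$ continuous'') is just the mirror image of the paper's use of ``$\mu*\ti{\mu}$ has a discrete part implies $\mu$ does,'' and the constants you track coincide with those asserted.
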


We prove briefly here the statement concerning the relations between the constants $\varepsilon $, $\gamma $, and $\delta $ appearing in (i), (ii), and (iii) respectively, following \cite{BG1} and \cite{BG2}.

\begin{proof}
 Suppose that (i) is satisfied for $\varepsilon \in(0,\sqrt{2})$, and let $\mu \in\mathcal{P}(\T)$. Consider the unitary operator $U=M_{\lambda }$ of multiplication by $\lambda $ on $L^{2}(\T,\mu )$. Let $f$ be the function constantly equal to $1$. Then 
 $||U^{n}f-f||^{2}=2(1-\Re e\,\wh{\mu }(n))$. If $\sup_{n\in Q}(1-\Re e\,\wh{\mu }(n))<\varepsilon ^{2}/2$, $U$ has an eigenvalue since $\Kct(Q)\ge \varepsilon$, and so $\mu$ has a discrete part.
 \par\smallskip
Conversely, suppose that (ii) is satisfied  for $\gamma \in(0,1)$. Let $U$ be a unitary  operator on a separable Hilbert space $H$, and let $x\in H$ with $||x||=1$ be such that
 \[\sup_{n\in Q}||U^{n}x-x||<\sqrt{ 2\gamma}.\] The proof of \cite[Th.\,4.6]{BG2} shows then that there exists $\mu \in\mathcal{P}(\T)$ such that
 \[
2\sup_{n\in Q}(1-\Re e\,\wh{\mu }(n))=\sup_{n\in Q}||U^{n}x-x||^{2}<2\gamma.
\]
So $\sup_{n\in Q}(1-\Re e\,\wh{\mu }(n))<\gamma $. By (ii), $\mu $ has a discrete part, and so $U$ has an eigenvalue. Hence $\Kct(Q)\ge \sqrt{ 2\gamma}.$
\par\smallskip
 Suppose next that property (ii) is satisfied for $\gamma \in(0,1)$. Let $\mu \in\mathcal{P}(\T )$ be such that $\inf_{n\in Q}|\wh{\mu }(n)|>\sqrt{1-\gamma} $. Set $\nu =\mu *\widetilde{\mu }$. Then $\inf_{n\in Q}\wh{\nu }(n)>1-\gamma $. It follows that
 $\sup_{n\in Q}(1-\wh{\nu }(n))<\gamma $, and $\nu $ has a discrete part. So $\mu $ itself has a discrete part.
 \par\smallskip
 Lastly, suppose that  (iii) is satisfied for $\delta \in(0,1)$. Let $\mu \in\mathcal{P}(\T )$ be a measure satisfying $\sup_{n\in Q}(1-\Re e\,\wh{\mu }(n))<{1-\delta} $. Then $\inf_{n\in Q}|\wh{\mu }(n)|\ge \inf_{n\in Q}\Re e\,\wh{\mu }(n) >\delta $, so $\mu $ has a discrete part.
\end{proof}

\begin{remark}
 Given a subset $Q$ of $\Z$, one can prove, using the spectral theorem for unitary operators, that the following assertions are equivalent (see \cite[Th.\,4.6]{BG2}):
 \par\smallskip
 \begin{enumerate}
\item[{(i')}] $Q$ is a \ka\ subset of $\Z$, i.e.\ there exists $\varepsilon \in(0,\sqrt{2})$ such that $(Q,\varepsilon )$ is a \ka\ pair;
\item[{(ii')}] there exists $\gamma \in(0,1)$ such that any measure $\mu \in\mathcal{P}(\T)$ with $\sup_{n\in Q}(1-\Re e\,\wh{\mu }(n))<\gamma $ is such that $\mu (\{1\})>0$.
\end{enumerate}
Moreover {(i')} holds true for a certain constant $\varepsilon \in (0,\sqrt{2})$ (i.e. $\Kc(Q)\ge \varepsilon$) if and only if  {(ii')} holds true for $\gamma =\varepsilon ^{2}/2$.
\par\smallskip
It is interesting to note that these two conditions (i') and (ii') are not equivalent to the natural version (iii') of (iii) (namely, that there exists $\delta \in(0,1)$ such that any measure $\mu \in\mathcal{P}(\T)$ with $\inf_{n\in Q}|\wh{\mu }(n)|>\delta $ satisfies $\mu (\{1\})>0$). Indeed, (iii') is satisfied for any Dirac mass
$\delta_{\{\lambda\}}$, $\lambda\in\T$. The proof that (ii) implies (iii) in Theorem \ref{Th B} above uses in a crucial way the fact that if 
$\mu\in\mathcal{P}(\T)$ is such that $\mu *\widetilde{\mu }$ has a discrete part, $\mu$ itself has a discrete part. But $\mu *\widetilde{\mu }$ may very well satisfy $\mu *\widetilde{\mu }(\{1\})>0$ while $\mu(\{1\})=0$, and so (ii') does not imply (iii').
\end{remark}

Theorem \ref{Th B} is related to Conjecture (C4) in the following way:

\begin{corollary}\label{Cor C}
 Let $Q$ be a generating subset of $\Z$. The following assertions are equi\-va\-lent:
 \begin{enumerate}
\item[\emph{($\alpha $)}] $Q$ is a \ka\ subset of $\Z$ with $\Kct(Q) =\sqrt{2}$;
\item[\emph{($\beta $)}] any measure $\mu \in\mathcal{P}_{c}(\T)$ satisfies $\inf_{n\in Q}|\wh{\mu }(n)|=0$;
\item[\emph{($\gamma $)}] any measure $\mu \in\mathcal{P}_{c}(\T)$ satisfies 
$\liminf_{\genfrac{}{}{0pt}{}{|n|\to+\infty}{n\in Q}}|\wh{\mu }(n)|=0$.
\end{enumerate}
\end{corollary}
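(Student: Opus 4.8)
The plan is to establish the equivalences $(\alpha)\Leftrightarrow(\beta)$ and $(\beta)\Leftrightarrow(\gamma)$ separately, with the auxiliary statement
\[
(\beta')\qquad\textrm{every }\mu\in\mathcal{P}_{c}(\T)\textrm{ satisfies }\inf_{n\in Q}\Re e\,\wh{\mu}(n)\le0
\]
serving as a bridge. (One may assume $Q$ infinite: otherwise $Q$ is not a \ka\ subset of $\Z$, so $\Kct(Q)=0$ and $(\alpha)$ fails; a probability measure carried by a small enough arc around $1$ has $\inf_{n\in Q}|\wh{\mu}(n)|>0$, so $(\beta)$ fails; and the $\liminf$ in $(\gamma)$ runs over the empty set, so $(\gamma)$ fails too.)

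\medskip

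\emph{$(\alpha)\Leftrightarrow(\beta')\Leftrightarrow(\beta)$.} Since $\Kct(Q)=\sqrt2\ne0$, assertion $(\alpha)$ is just that $\Kct(Q)=\sqrt2$; and as $\Kct(Q)\le\sqrt2$ always, this is the same as saying $\Kct(Q)\ge\varepsilon$ for every $\varepsilon\in(0,\sqrt2)$. By the precise equivalence ``(i) holds for $\varepsilon$ $\Leftrightarrow$ (ii) holds for $\gamma=\varepsilon^{2}/2$'' in Theorem \ref{Th B} (I would use this correspondence rather than condition (iii), whose constants are not sharp enough to recover $\Kct(Q)=\sqrt2$), and letting $\gamma=\varepsilon^{2}/2$ run through $(0,1)$, this is equivalent to: every $\mu\in\mathcal{P}(\T)$ with $\sup_{n\in Q}(1-\Re e\,\wh{\mu}(n))<1$ has a discrete part, i.e.\ every \emph{continuous} $\mu$ satisfies $\sup_{n\in Q}(1-\Re e\,\wh{\mu}(n))\ge1$, which is exactly $(\beta')$. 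For $(\beta')\Leftrightarrow(\beta)$: the implication $(\beta)\Rightarrow(\beta')$ is immediate (if $\wh{\mu}(n_{j})\to0$ along $Q$ then $\Re e\,\wh{\mu}(n_{j})\to0$), and for $(\beta')\Rightarrow(\beta)$ I would apply $(\beta')$ to $\nu=\mu*\ti{\mu}\in\mathcal{P}_{c}(\T)$, which has $\wh{\nu}(n)=|\wh{\mu}(n)|^{2}\ge0$; then $\inf_{n\in Q}|\wh{\mu}(n)|^{2}=\inf_{n\in Q}\Re e\,\wh{\nu}(n)\le0$, forcing $\inf_{n\in Q}|\wh{\mu}(n)|=0$. (Alternatively one could skip Theorem \ref{Th B} and compute $\Kct(Q)$ directly via the spectral theorem: identifying a unitary without eigenvalues together with a cyclic unit vector with $M_{\lambda}$ on $L^{2}(\T,\mu)$ for a continuous $\mu$ and the constant function $1$, and using $\|M_{\lambda}^{n}1-1\|^{2}=2(1-\Re e\,\wh{\mu}(n))$, one gets $\Kct(Q)^{2}=2\bigl(1-\sup_{\mu\in\mathcal{P}_{c}(\T)}\inf_{n\in Q}\Re e\,\wh{\mu}(n)\bigr)$, from which $(\alpha)\Leftrightarrow(\beta')$ follows at once.)

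\medskip

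\emph{$(\beta)\Leftrightarrow(\gamma)$.} Here $(\gamma)\Rightarrow(\beta)$ is immediate since $\inf_{n\in Q}|\wh{\mu}(n)|\le\liminf_{|n|\to+\infty,\,n\in Q}|\wh{\mu}(n)|$. For the converse I would argue by contraposition: suppose $\mu\in\mathcal{P}_{c}(\T)$ satisfies $\liminf_{|n|\to+\infty,\,n\in Q}|\wh{\mu}(n)|=c>0$, and pick $N$ with $|\wh{\mu}(n)|\ge c/2$ for all $n\in Q$ with $|n|\ge N$. Then $F_{0}=\{n\in Q:\wh{\mu}(n)=0\}$ is finite, and there is $c_{0}>0$ with $|\wh{\mu}(n)|\ge c_{0}$ for all $n\in Q\setminus F_{0}$ (take $c_{0}=\min$ of $c/2$ and the finitely many positive numbers $|\wh{\mu}(n)|$ with $n\in Q$, $|n|<N$, $n\notin F_{0}$). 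Now fix a continuous probability measure $\sigma$ with $\wh{\sigma}(n)\ne0$ for every $n\in F_{0}$ (for instance an absolutely continuous $\sigma$ with all Fourier coefficients non-zero, or normalized arc length on a small arc around $1$), and set $\mu'=(1-\varepsilon)\mu+\varepsilon\sigma\in\mathcal{P}_{c}(\T)$. Then $|\wh{\mu'}(n)|\ge(1-\varepsilon)c_{0}-\varepsilon$ for $n\in Q\setminus F_{0}$, while $|\wh{\mu'}(n)|=\varepsilon|\wh{\sigma}(n)|$ stays bounded below by a positive constant on the finite set $F_{0}$; hence $\inf_{n\in Q}|\wh{\mu'}(n)|>0$ for $\varepsilon$ small enough, contradicting $(\beta)$.

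\medskip

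Everything above is short except the converse implication in the last paragraph, which I expect to be the main obstacle: the key realization is that a positive $\liminf$ along $Q$ leaves only \emph{finitely many} Fourier coefficients of $\mu$ to be repaired, so a small-weight convex combination with a fixed continuous measure not vanishing at those finitely many frequencies does the job, the smallness of $\varepsilon$ ensuring that the (possibly badly phased) perturbation cannot spoil the coefficients that were already bounded away from $0$. The only other point requiring some care is the bookkeeping of constants in $(\alpha)\Leftrightarrow(\beta')$, where one must pass through condition (ii) of Theorem \ref{Th B} and not condition (iii).
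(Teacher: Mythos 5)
Your argument is correct and essentially the paper's: the equivalence ($\alpha$)$\Leftrightarrow$($\beta$) is extracted from Theorem \ref{Th B} (you via condition (ii), the auxiliary statement ($\beta'$) and the measure $\mu*\ti{\mu}$; the paper via condition (iii), quantified over all $\delta$), and ($\beta$)$\Rightarrow$($\gamma$) is obtained, as in the paper, by mixing in an auxiliary continuous measure whose Fourier coefficients do not vanish at the problematic frequencies --- the paper does it directly by adding a Rajchman measure with positive coefficients to $\mu*\ti{\mu}$, you contrapose and repair the finitely many zeros, which is the same idea. One minor side remark: your claim that condition (iii) is ``not sharp enough to recover $\Kct(Q)=\sqrt{2}$'' is unfounded, since letting $\delta$ run through $(0,1)$ in (iii) yields (i) for every $\varepsilon=\sqrt{2(1-\delta)}$, which exhausts $(0,\sqrt{2})$; this does not affect your proof, as you route through (ii) anyway.
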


\begin{proof} The equivalence between ($\alpha $) and ($\beta $) follows immediately from Theorem \ref{Th B}.
 So only the implication ($\beta $)$\Longrightarrow$($\gamma $) requires a proof. Suppose that any $\mu \in\mathcal{P}_c(\T)$ satis\-fies $\inf_{n\in Q}|\wh{\mu }(n)|=0$. We want to show that  the conclusion can be reinforced into $\liminf_{\genfrac{}{}{0pt}{}{|n|\to+\infty}{n\in Q}}|\wh{\mu }(n)|=0$. Let $\rho \in\mathcal{P}_c(\T)$ be a Rajchman measure with positive coefficients, that is such that $\lim_{|n|\to+\infty}\wh{\rho }(n)=0$ and $\wh{\rho }(n)>0$ for every $n\in\Z$. Consider the measure $\nu =(\mu *\ti{\mu }+\rho )/2$. It is continuous and satisfies $\nu (n)>0$ for every 
$ n\in\Z$. Since $\inf_{n\in Q}\wh{\nu}(n)=0$ and $\nu (n)>0$ for every 
$ n\in\Z$, $\liminf_{\genfrac{}{}{0pt}{}{|n|\to+\infty}{n\in Q}}\wh{\nu }(n)=0$. Hence $\liminf_{\genfrac{}{}{0pt}{}{|n|\to+\infty}{n\in Q}}|\wh{\mu }(n)|^{2}=0$, and the conclusion follows. 
\end{proof}

So Conjecture (C4) is equivalent to the statement that any non-lacunary  semigroup of integers has modified \ka\ constant $\sqrt{2}$. We can also estimate the Fourier coefficients of a continuous probability measure on $\T$ which is $T_2$- and $T_3$-invariant in terms of the modified Kazhdan constant $\tilde{\kappa}>0$ of the Furstenberg set. Notice that Proposition \ref{prop:coeff} is meaningful only if $\tilde{\kappa}>0$.

\begin{proposition}\label{prop:coeff}
Let $F=\{2^k3^{k'}\textrm{ ; }k,k'\ge 0\}$ and set $\tilde{\kappa} = \Kct(F)$. Let $\mu$ be a continuous probability measure on $\T$ which is $T_2$- and $T_3$-invariant. Then
$$ \left|\hat{\mu}(j)\right| \le  1-\frac{\tilde{\kappa}^2}{2} \quad \textrm{ for every }j\in\Z\setminus\{0\}.$$
\end{proposition}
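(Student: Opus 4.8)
The plan is to reduce the statement, by pushing $\mu$ forward under $\lambda\mapsto\lambda^{j}$, to a single continuous measure whose Fourier transform is \emph{constant} on $F$, and then to feed this directly into the definition of $\Kct(F)$. First I would record that $T_{2}$- and $T_{3}$-invariance of $\mu$ mean precisely that $\hat\mu(2n)=\hat\mu(n)$ and $\hat\mu(3n)=\hat\mu(n)$ for all $n\in\Z$, whence $\hat\mu(2^{k}3^{k'}m)=\hat\mu(m)$ for every $m\in\Z$ and all $k,k'\ge0$. Fix $j\in\Z\setminus\{0\}$ and put $\nu:=T_{j}(\mu)$. Since $\lambda\mapsto\lambda^{j}$ is $|j|$-to-one and $\mu$ is atomless, $\nu\in\mathcal{P}_{c}(\T)$; and since every $n\in F$ has the form $n=2^{k}3^{k'}$ with $k,k'\ge0$, we get $\hat\nu(n)=\hat\mu(jn)=\hat\mu(j)$. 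So $\nu$ is a continuous probability measure with $\hat\nu\equiv\hat\mu(j)$ on $F$.

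The core of the argument is then a one-line application of the definition of $\Kct(F)$. Let $V=M_{\lambda}$ be multiplication by $\lambda$ on $L^{2}(\T,\nu)$; as $\nu$ is atomless, $V$ is a unitary with continuous spectrum. For the constant function $f\equiv1$ (a unit vector) and every $n\in F$,
\[
\|V^{n}f-f\|^{2}=2\bigl(1-\Re\,\hat\nu(n)\bigr)=2\bigl(1-\Re\,\hat\mu(j)\bigr),
\]
hence $\sup_{n\in F}\|V^{n}f-f\|=\sqrt{2(1-\Re\,\hat\mu(j))}$. As $V$ and $f$ are admissible in the infimum defining the modified Kazhdan constant, $\tilde\kappa=\Kct(F)\le\sqrt{2(1-\Re\,\hat\mu(j))}$, that is $\Re\,\hat\mu(j)\le1-\tilde\kappa^{2}/2$; the same bound holds for every $j\neq0$ and every continuous $T_{2}$-, $T_{3}$-invariant measure.

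To pass from the real part to the modulus one would like to apply this to a measure whose Fourier coefficient at $j$ is $|\hat\mu(j)|$. Running the two preceding steps for $\mu*\widetilde\mu$ — again continuous and $T_{2}$-, $T_{3}$-invariant, with $\widehat{\mu*\widetilde\mu}(n)=|\hat\mu(n)|^{2}\ge0$ — yields $|\hat\mu(j)|^{2}=\Re\,\widehat{\mu*\widetilde\mu}(j)\le1-\tilde\kappa^{2}/2$. The remaining step, which I expect to be the main obstacle, is to sharpen this into $|\hat\mu(j)|\le1-\tilde\kappa^{2}/2$ (note $\sqrt{1-\tilde\kappa^{2}/2}\ge1-\tilde\kappa^{2}/2$, so the square-root bound alone is strictly weaker). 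Reaching the right exponent should use more of the structure — the positive definiteness of $(\hat\mu(n))_{n\in\Z}$ together with the invariance relation $\hat\mu(2j)=\hat\mu(j)$, or a better almost-invariant vector built from the powers $\lambda^{2^{k}j}$, which in $L^{2}(\mu)$ are nearly fixed by the Koopman operator of $T_{2}$ — combined with the real-part estimate above.
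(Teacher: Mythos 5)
Your reduction and the steps you actually carry out are correct, and they follow the same mechanism as the paper: push $\mu$ forward to $\mu_j=T_j(\mu)$, note that $\mu_j$ is continuous with $\widehat{\mu_j}\equiv\widehat{\mu}(j)$ on $F$, and feed this into the measure-theoretic description of $\Kct(F)$ (the paper routes this through conditions (ii)--(iii) of Theorem \ref{Th B} instead of the multiplication operator, but it is the same computation). However, as you say yourself, you do not reach the stated inequality: you obtain $\Re e\,\widehat{\mu}(j)\le 1-\tilde{\kappa}^2/2$ and, after convolving with $\widetilde{\mu}$, only $|\widehat{\mu}(j)|^2\le 1-\tilde{\kappa}^2/2$, i.e. $|\widehat{\mu}(j)|\le\sqrt{1-\tilde{\kappa}^2/2}$, which is strictly weaker than the first-power bound whenever $0<\tilde{\kappa}<\sqrt{2}$ and $\widehat{\mu}(j)\neq 0$. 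The ideas you sketch for closing this do not obviously work: what is needed is a \emph{continuous} probability measure whose Fourier coefficients have real part at least $|\widehat{\mu}(j)|-\epsilon$ on all of $F$, and for instance a rotation $\delta_{\{\alpha\}}*\mu_j$ has coefficients $\alpha^{n}\widehat{\mu}(j)$ on $F$, so since $1,2,3\in F$ the phase can only be removed when $\widehat{\mu}(j)$ is already essentially nonnegative; likewise the functions $\lambda^{2^k j}$ are not almost fixed by the Koopman operator of $T_2$ on $L^2(\mu)$ unless $\Re e\,\widehat{\mu}(j)$ is close to $1$. So the proposal has a genuine gap at exactly the point you flag.

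For comparison, the paper's proof is a two-line argument: since $\mu_j$ is continuous with $|\widehat{\mu_j}|\equiv|\widehat{\mu}(j)|$ on $F$, any $\delta$ for which condition (iii) of Theorem \ref{Th B} holds must satisfy $\delta\ge|\widehat{\mu}(j)|$, and it then concludes $\tilde{\kappa}\le\sqrt{2(1-|\widehat{\mu}(j)|)}$ by invoking Theorem \ref{Th B} again. But if you trace the constants in that theorem, condition (i) with $\varepsilon=\tilde{\kappa}$ only yields condition (iii) with $\delta=\sqrt{1-\tilde{\kappa}^2/2}$ (the square root comes from the same $\mu*\widetilde{\mu}$ convolution you used), so what this argument actually delivers is $|\widehat{\mu}(j)|\le\sqrt{1-\tilde{\kappa}^2/2}$, i.e. exactly your bound; the printed exponent would require (iii) to hold with $\delta=1-\tilde{\kappa}^2/2$, which Theorem \ref{Th B} does not assert. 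In short, the obstacle you identify is real, your partial result coincides with what the paper's own argument establishes, and the passage from the real-part/squared-modulus estimates to $|\widehat{\mu}(j)|\le 1-\tilde{\kappa}^2/2$ is not justified on either route as written.
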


\begin{proof}
Set, for every $j\in\Z\setminus\{0\}$, $\mu_j=T_j\mu$. Then $\mu_j$ is a continuous measure which satisfies $\hat{\mu}_j(2^k3^{k'})= \hat{\mu}(j)$ for every $k,k'\ge 0$ It follows that if $\delta\in (0,1)$ is such that (iii) of Theorem \ref{Th B} is satisfied, $\delta\ge \left|\hat{\mu}(j)\right|$. Hence, by Theorem \ref{Th B} again, $\tilde{\kappa} \le \sqrt{2(1-\left|\hat{\mu}(j)\right|)}$.
\end{proof}

\begin{remark}\label{Remark1bis}
Although a generating subset $Q$ of $\Z$ is a \ka\ set if and only if $\Kct(Q)>0$, there is no link between the \ka\ constant and the modified \ka\ constant of $Q$. Indeed, there exist \ka\ subsets $Q$ of $\Z$ with maximal modified constant $\Kct(Q) =\sqrt{2}$ and arbitrarily small \ka\ constant $\Kc(Q)$. This relies on the following observation, which can be extracted from the proof of \cite[Th\,7.1]{BG2} and results from Proposition \ref{Proposition 0 bis} below.

\begin{proposition}\label{Proposition 0}
 Let $(n_{k})_{k\ge 0}$ be a strictly increasing sequence of integers with $n_{0}=1$ such that $(n_{k}\theta )_{k\ge 0}$ is uniformly distributed modulo $1$ for every $\theta \in\R\setminus D$, where $D$ is countable subset of $\R$. Then the set $Q=\{n_{k}\,;\,k\ge 0\}$ is a \ka\ subset of $\Z$ which satisfies $\Kct(Q)=\sqrt{2}$.
\end{proposition}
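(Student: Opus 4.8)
The plan is to reduce the statement, by means of Corollary \ref{Cor C}, to an assertion about Fourier coefficients of continuous measures, and then to prove that assertion using Weyl's criterion together with dominated convergence. Since $n_{0}=1$ belongs to $Q$, the set $Q$ is generating in $\Z$, so Corollary \ref{Cor C} applies: it suffices to establish condition $(\beta)$ there, namely that every $\mu\in\mathcal{P}_{c}(\T)$ satisfies $\inf_{n\in Q}|\wh{\mu}(n)|=0$. In fact the argument below will directly give the formally stronger condition $(\gamma)$, i.e.\ $\liminf_{n\in Q,\,|n|\to+\infty}|\wh{\mu}(n)|=0$.

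So I would fix $\mu\in\mathcal{P}_{c}(\T)$ and pass to the measure $\nu:=\mu*\ti{\mu}\in\mathcal{P}_{c}(\T)$, whose Fourier coefficients $\wh{\nu}(n)=|\wh{\mu}(n)|^{2}$ are nonnegative; this passage is the point that lets one rule out cancellation among the a priori complex numbers $\wh{\mu}(n_{k})$. Writing $\lambda=e^{2\pi i t}$, for every $t$ such that $(n_{k}t)_{k\ge 0}$ is uniformly distributed modulo $1$ — hence for every $t\notin D$ — Weyl's criterion applied with frequency $h=1$ yields
\[
\frac{1}{N}\sum_{k=0}^{N-1}e^{2\pi i n_{k}t}\longrightarrow 0\qquad\textrm{as }N\to+\infty .
\]
These Cesàro means are bounded by $1$ in modulus, and $\nu(D)=0$ since $\nu$ is continuous and $D$ is countable; dominated convergence then gives
\[
\frac{1}{N}\sum_{k=0}^{N-1}\wh{\nu}(n_{k})=\int_{\T}\Bigl(\frac{1}{N}\sum_{k=0}^{N-1}e^{2\pi i n_{k}t}\Bigr)d\nu(t)\longrightarrow 0\qquad\textrm{as }N\to+\infty .
\]
Because $\wh{\nu}(n_{k})=|\wh{\mu}(n_{k})|^{2}\ge 0$ and a sequence of nonnegative reals whose Cesàro averages tend to $0$ must have $\liminf$ equal to $0$, I conclude that $\liminf_{k}|\wh{\mu}(n_{k})|^{2}=0$, which is both $(\beta)$ and $(\gamma)$. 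Corollary \ref{Cor C} then shows that $Q$ is a \ka\ subset of $\Z$ with $\Kct(Q)=\sqrt{2}$.

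The proof is short, and the only real work is in identifying the two ingredients — the reduction through Corollary \ref{Cor C}, and the replacement of $\mu$ by $\nu=\mu*\ti{\mu}$ so as to work with nonnegative Fourier coefficients. The points that deserve a careful phrasing are that one invokes Weyl's criterion only for the single frequency $h=1$, and that the set of $t$ on which the exponential sums fail to converge to $0$ is contained in the countable set $D$ and is therefore $\nu$-null; no substantial obstacle arises beyond this.
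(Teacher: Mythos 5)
Your proof is correct and follows essentially the same route as the paper: the paper deduces Proposition \ref{Proposition 0} from the more general Proposition \ref{Proposition 0 bis}, whose proof likewise integrates the Weyl exponential averages against a continuous measure (which annihilates the countable exceptional set) and invokes the Fourier-coefficient characterization of $\Kct$ from Theorem \ref{Th B}. The only cosmetic difference is that you argue directly through condition $(\beta)$ of Corollary \ref{Cor C} after replacing $\mu$ by $\mu*\ti{\mu}$, whereas the paper argues by contradiction through condition (ii) on $1-\Re e\,\wh{\mu}(n)$; both are valid.
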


Consider, for every integer $p\ge 2$, the set $Q_{p}=p\,\N+1$. By Proposition \ref{Proposition 0}, $Q_{p}$ is a \ka\ subset of $\Z$ with $\Kct(Q_p)=\sqrt{2}$. But the measure $\mu =\delta _{\{e^{2i\pi /p}\}}$ satisfies
\[
\sup_{n\in Q_p}\left(1-\Re e\,\wh{\mu }(n)\right)=1-\cos(2\pi /p).
\]
Hence $\Kc(Q_p)\le\sqrt{2(1-\cos(2\pi /p))}$, which can be arbitrarily small if $p$ is sufficiently large.
\end{remark}

\section{Applications}\label{Section 6}

\subsection{Proof of Theorem \ref{Th3}}
Our first and main application of Theorem \ref{Th2} (or Corollary \ref{Cor2}) is Theorem \ref{Th3}, which solves in particular Conjecture (C4) and shows that the invariance assumption on the measure is indeed essential in the statement of Furstenberg's $\times 2$\,-$\times 3$ conjecture.

\begin{proof}[Proof of Theorem \ref{Th3}]
If $r=1$, Theorem \ref{Th3} claims the existence, for every integer $p\ge 2$, of a measure $\mu \in\mathcal{P}_{c}(\T)$ such that $\inf_{k\ge 0}|\wh{\mu }(p^{k})|>0$. As mentioned in Section \ref{sect:2}, this statement is well-known: it suffices to consider the classical Riesz product associated to the sequence $(p^{k})_{k\ge 0}$. One can also show, either as in \cite{BDLR} or \cite{EG}, or as an application of Corollary \ref{Cor1}, that $(p^{k})_{k\ge 0}$ is a rigidity sequence, so that there exists $\mu \in\mathcal{P}_{c}(\T)$ with 
 ${\wh{\mu }(p^{k})}\rightarrow{1}$ as ${k}\rightarrow{+\infty}$.
\par\smallskip
 Suppose now that $r\ge 2$, and consider, for every fixed index $1\le j\le r$, the set 
\[
C'_{j}=\{e^{2i\pi n p_{j}^{-l}}\,;\,n,l\ge 0\}
\] 
of roots of all powers of $p_j$.
It is dense in $\T$, and has the following property: there exists for every $\lambda \in C'_{j}$ an integer $l_{j}$ such that $\lambda ^{p_{1}^{k_{1}}p_{2}^{k_{2}}\dots\, p_{r}^{k_{r}}}=1$ for every $k_{j}\ge l_{j}$ and $k_{i}\ge 0$, $1\le i\le r$ with $i\neq j$. Hence
\[
\sup_{\genfrac{}{}{0pt}{}{k_{i}\ge 0}{1\le i\le r,\ i\neq j}}\bigl|\lambda ^{p_{1}^{k_{1}}\dots\, p_{r}^{k_{r}}}-1  \bigr| \rightarrow{0}\quad \textrm{as}\ {k_{j}}\rightarrow{+\infty}.
\]
Consider the two sequences $(m_{k})_{k\ge 0}$ and $(n_{k'})_{k'\ge 0}$ obtained by setting $m_{k}=p_{j}^{k}$, 
$k\ge 0$, and ordering the set 
\[
\bigl \{p_{1}^{k_{1}}\dots\, p_{j-1}^{k_{j-1}}p_{j+1}^{k_{j+1}}\dots\, p_{r}^{k_{r}}\,;\,
k_{i}\ge 0,\ 1\le i\le r\ \textrm{with}\ i\neq j\bigr\} 
\]
as a strictly increasing sequence $(n_{k'})_{k'\ge 0}$, and let ${\psi \,:\,\N}\longrightarrow{\N}$ be a strictly increasing function such that 
\[
\bigl \{p_{1}^{k_{1}}\dots\, p_{j-1}^{k_{j-1}}p_{j+1}^{k_{j+1}}\dots\, p_{r}^{k_{r}}\,;\,
0\le k_{i}\le k,\ 1\le i\le r\ \textrm{with}\ i\neq r\bigr\} 
\]
is contained in the set $\{n_{k'}\,;\,0\le k'\le \psi (k)\}$  for every $k\ge 0$. By Corollary \ref{Cor2}, there exists a measure $\mu _{j}\in\mathcal{P}_{c}(\T)$ such that 
${\wh{\mu }_{j}(p_{1}^{k_{1}}\dots\,p_{r}^{k_{r}})}\rightarrow{1}$ as ${k_{j}}\rightarrow{+\infty}$ with $0\le k_{i}\le k_{j}$, $1\le i\le r$ with $i\neq j$.
Replacing, for every $1\le j\le r$, $\mu _{j}$ by $\mu _{j}*\ti{\mu }_{j}$, we can suppose without loss of generality that $\wh{\mu }_{j}(n)\ge 0$ for every $n\in\Z$.
\par\medskip
Let now $\rho \in\mathcal{P}_{c}(\T)$ be such that $\wh{\rho} (n)>0$ for every $n\in\Z$, and set
\[
\mu =\dfrac{1}{r+1}\Bigl (\sum_{j=1}^{r}\mu _{j}+\rho  \Bigr). 
\]
Then $\mu $ is a continuous probability measure on $\T$ with $\wh{\mu }(n)>0$ for every $n\in\Z$. Moreover, we have
\begin{equation}\label{Eq13}
 \liminf\,\,\wh{\mu }\bigl ( p_{1}^{k_{1}}p_{2}^{k_{2}}\dots\, p_{r}^{k_{r}}\bigr)\ge\dfrac{1}{r+1}\quad \textrm{as}\ {\max(k_{1},\dots,k_{r})}\rightarrow{+\infty}. 
\end{equation}
Indeed, if $(k_{1}^{(l)},\dots,k_{r}^{(l)})_{l\ge 1}$ is an infinite sequence of elements of $\N^{r}$, one can extract from it a sequence (still denoted by $(k_{1}^{(l)},\dots,k_{r}^{(l)})_{l\ge 1}$) with the following property: there exists $1\le j\le r$ such that $k_{i}^{(l)}\le k_{j}^{(l)}$ for every $1\le i\le r$. Then
\[
\liminf_{{l}\rightarrow{+\infty}}\wh{\mu}\bigl (p_{1}^{k_{1}^{(l)}}\dots p_{r}^{k_{r}^{(l)}} \bigr)\ge\dfrac{1}{r+1}\liminf_{{l}\rightarrow{+\infty}}\wh{\mu}_{j}\bigl (p_{1}^{k_{1}^{(l)}}\dots p_{r}^{k_{r}^{(l)}} \bigr)=\dfrac{1}{r+1}\cdot 
\]
This yields (\ref{Eq13}). Since $\wh{\mu }(n)>0$ for every $n\ge 0$, it follows that 
\[
\inf_{\genfrac{}{}{0pt}{}{k_{i}\ge 0}{\ 1\le i\le r}}\wh{\mu }\bigl (p_{1}^{k_{1}}\dots\,p_{r}^{k_{r}} \bigr)>0,
\]
and Theorem \ref{Th3} is proved.
\end{proof}

\subsection{The case of the Furstenberg set} 
Theorem \ref{Th3} applies to the Furstenberg set $F=\{2^k3^{k'}\textrm{ ; }k,k'\ge 0\}$ and shows the existence of a measure $\mu\in\mathcal{P}_{c}(\T)$
such that \[\inf_{\genfrac{}{}{0pt}{}{k,k'\ge 0}{}}\mc(2^{k}3^{k'})>0\]
(the fact that the measure $\mu$ can be supposed to have nonnegative Fourier coefficients can be extracted from the proof of Theorem \ref{Th3}, or deduced formally from Theorem \ref{Th3} by considering the measure $\mu*\ti{\mu}$). By Corollary \ref{Cor C}, this means that $\Kct(F)<\sqrt{2}$.
\par\smallskip
As mentioned in the introduction, it is natural to look for the optimal constant $\delta \in(0,1)$ for which there exists a measure $\mu \in\mathcal{P}_{c}(\T)$ such that 
\begin{equation}\label{Eq14}
\inf_{\genfrac{}{}{0pt}{}{k,k'\ge 0}{}}\mc(2^{k}3^{k'})\ge\delta
\end{equation} 

This is equivalent to asking whether $F$ is a \ka\ set in $\Z$, and if yes, with which (modified) \ka\ constant. The best result which can be obtained via the methods presented here is that there exists a measure $\mu \in\mathcal{P}_{c}(\T)$ satisfying (\ref{Eq14}) for every
$\delta\in(0,1/2)$: this is the content of Theorem \ref{Th3bis}, which we now prove.
  \par\smallskip
\begin{proof}[Proof of Theorem \ref{Th3bis}]
 The proof goes along the same lines as that of Theorem \ref{Th3}, but it requires the full force of Theorem \ref{Th2} rather than the weaker statement of Corollary \ref{Cor2}.
\par\smallskip
Fix $\delta\in(0,1/2)$. There exist by Theorem \ref{Th2} two measures $\mu_1,\mu_2\in\mathcal{P}_{c}(\T) $ such that 
\[|\mc_{1}(2^{k}3^{k'})|\ge \sqrt{2\delta}\quad \textrm{for every } k\ge 0 \textrm{ and every }0\le k'\le k
\] and 
\[|\mc_{2}(2^{k}3^{k'})|\ge \sqrt{2\delta}\quad \textrm{for every } k'\ge 0 \textrm{ and every } 0\le k\le k'.
\]
 The measure $\mu=\frac{1}{2}(\mu_1*\ti{\mu }_1+\mu_2*\ti{\mu }_2)$ has nonnegative Fourier coefficients and satisfies
$\mc(2^{k}3^{k'})\ge \delta$ for every $k,k'\ge 0$.
\par\smallskip
It then follows from Theorem \ref{Th B} that if $\{2^k3^{k'}\textrm{ ; }k,k'\ge 0\}$ is a \ka\ subset of $\Z$, its modified \ka\ constant must be less than
$\sqrt{2(1-\delta)}$ for every $\delta \in (0,1/2)$, so must be at most $1$.
\end{proof}

That the bound $1/2$ can be further improved does not seem clear at all, and we do not know whether there exists for every $\delta \in[1/2,1)$ a measure $\mu \in\mathcal{P}_{c}(\T)$ such that \[{\inf_{\genfrac{}{}{0pt}{}{k,k'\ge 0}{}}\mc(2^{k}3^{k'})\ge \delta} .\] 

\begin{question}\label{Question 1}
 Is the Furstenberg set $\{2^{k}3^{k'}\,;\,k,k'\ge 0\}$ a \ka\ set in $\Z$?
\end{question}

Note that a lacunary semigroup $\{a^n\textrm{ ; } n\ge 0\}$, $a\ge 2$, cannot be a \ka\ set (see \cite[Ex. 5.2]{BG2}). We also observe that  Theorem \ref{Th2} immediately yields 

\begin{corollary}\label{Corollary 6}
 For any function ${\psi \,:\,\N}\rightarrow{\N}$ with  ${\psi (k)}\rightarrow{+\infty}$ as
 ${k}\rightarrow{+\infty}$,
the sets \[\{2^{k}3^{k'}\,;\,k\ge 0,\ 0\le k'\le \psi(k)\}\quad\textrm{and}\quad\{2^{k}3^{k'}\,;\,k'\ge 0,\ 0\le k\le \psi(k')\}\] are non-\ka\ sets in $\Z$. 
\end{corollary}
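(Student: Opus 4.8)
The plan is to deduce both assertions directly from Theorem \ref{Th2}, the only work being to choose the two sequences correctly and then to check that the associated set $C'_\psi$ is dense in $\T$.

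For the first set, I would apply Theorem \ref{Th2} with $m_k=2^k$ and $n_{k'}=3^{k'}$ for $k,k'\ge 0$ (two strictly increasing sequences of integers) and with the given function $\psi$, so that $\{m_k n_{k'}\,;\,k\ge 0,\ 0\le k'\le\psi(k)\}=\{2^k3^{k'}\,;\,k\ge 0,\ 0\le k'\le\psi(k)\}$. The key point is that if $\lambda=e^{2i\pi a/2^l}$ is a $2^l$-th root of unity ($a,l\ge 0$), then $\lambda^{2^k}=1$ for every $k\ge l$, hence
\[
\lambda^{m_k n_{k'}}=\bigl(\lambda^{2^k}\bigr)^{3^{k'}}=1\qquad\textrm{for every }k\ge l\textrm{ and every }k'\ge 0.
\]
In particular $\lambda^{m_k n_{k'}}\to 1$ as $k\to+\infty$ with $(k,k')\in D_\psi$, so $\lambda\in C'_\psi$. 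Since the set of all $2^l$-th roots of unity, $l\ge 0$, is dense in $\T$, so is $C'_\psi$, and Theorem \ref{Th2} yields that $\{2^k3^{k'}\,;\,k\ge 0,\ 0\le k'\le\psi(k)\}$ is a non-\ka\ subset of $\Z$.

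For the second set I would run the same argument with the roles of $2$ and $3$ exchanged: apply Theorem \ref{Th2} with $m_j=3^j$ and $n_i=2^i$ for $i,j\ge 0$, the function $\psi$, and $D_\psi=\{(j,i)\in\N^2\,;\,0\le i\le\psi(j)\}$, so that $\{m_j n_i\,;\,j\ge 0,\ 0\le i\le\psi(j)\}=\{2^k3^{k'}\,;\,k'\ge 0,\ 0\le k\le\psi(k')\}$. Here one uses that every $3^l$-th root of unity $\lambda=e^{2i\pi a/3^l}$ satisfies $\lambda^{3^j}=1$ for $j\ge l$, hence $\lambda^{3^j 2^i}=1$ for every $j\ge l$ and every $i\ge 0$; thus $C'_\psi$ contains all $3^l$-th roots of unity, $l\ge 0$, and is dense in $\T$. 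Theorem \ref{Th2} then gives that $\{2^k3^{k'}\,;\,k'\ge 0,\ 0\le k\le\psi(k')\}$ is a non-\ka\ subset of $\Z$.

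There is no genuine obstacle here: the whole content sits in Theorem \ref{Th2}, and the only thing to observe is that for a root of unity whose order is a power of $p$ the relation $\lambda^{p^k}=1$ holds for all large $k$ regardless of any further exponent, which is precisely what makes the uniformity in the second index built into the definition of $C'_\psi$ automatic and $C'_\psi$ dense in $\T$. This is the same mechanism already exploited in Remark \ref{Remark 6}.
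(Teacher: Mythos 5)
Your proof is correct and is exactly the argument the paper intends: the corollary is stated as an immediate consequence of Theorem \ref{Th2}, and the density of $C'_{\psi}$ via the $2^{l}$-th (resp.\ $3^{l}$-th) roots of unity is the same mechanism used in the proof of Theorem \ref{Th3} with the sets $C'_{j}$ of roots of powers of $p_{j}$.
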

\par\smallskip
 Along the same lines, one can also ask for which values of $\delta \in(0,1]$ there exists a measure $\mu \in\mathcal{P}_{c}(\T)$ such that $\ul\mc(2^{k}3^{k'})\ge \delta $ as ${\max(k,k')}\rightarrow{+\infty}$. The proof of Theorem \ref{Th3} allows us to exhibit a measure $\mu \in\mathcal{P}_{c}(\T)$ with nonnegative Fourier coefficients (namely $\mu =(\mu _{1}+\mu _{2})/2$) such that $\ul\mc(2^{k}3^{k'})\ge 1/2$ as ${\max(k,k')}\rightarrow{+\infty}$. Again, we do not know whether the constant $1/2$ can be improved. The strongest statement which could be expected in this direction is the existence of a measure $\mu \in\mathcal{P}_{c}(\T)$ such that ${\mc(2^{k}3^{k'})}\rightarrow{1}$
 as 
${\max(k,k')}\rightarrow{+\infty}$. This would show that the Furstenberg sequence is a rigidity sequence for weakly mixing dynamical systems. This natural question is raised in Remark 3.12 (b) of \cite{BDLR}
Remark 3.29 c) of \cite{BDLR}, 
 and we record it anew here:

\begin{question}\label{Question 2}
 Is the Furstenberg sequence a rigidity sequence for weakly mixing dynamical systems? 
\end{question}

\subsection{Examples of rigidity sequences}
Corollaries \ref{Cor1} and \ref{Cor2} allow us to retrieve directly all known examples of rigidity sequences from \cite{BDLR}, \cite{EG}, \cite{AHL}, \cite{Aa} and \cite{FT}. The only examples of rigidity sequences not covered by our results are those of \cite{FK} and \cite{Grie}. Indeed, Fayad and Kanigowski construct in \cite{FK} examples of rigidity sequences $(\nk)_{k\ge 0}$ such that $\{\lambda ^{\nk}\,;\,k\ge 0\}$ is dense in $\T$ for every $\lambda =e^{2i\pi \theta }\in\T$ with $\theta \in\R\setminus\Q$, and there exist for every integer $p\ge 2$ infinitely many integers $k$ such that $p$ does not divide $n_k$. So such sequences never satisfy the assumption of Corollary \ref{Cor1}. Griesmer strengthens this result in \cite{Grie} by showing the existence of rigidity sequences $(\nk)_{k\ge 0}$ such that $\{n_k\,;\,k\ge 0\}$ is dense in $\Z$ in the Bohr topology.
\par\medskip
We briefly list here some of the examples of rigidity sequences which can be obtained from Corollaries \ref{Cor1} and \ref{Cor2}. Our first example is that of Fayad and Thouvenot in \cite{FT}. 

\begin{example}\cite{FT} \label{Example 1}
If the sequence $(\nk)_{k\ge 0}$ is such that there exists $\lambda =e^{2i\pi \theta }\in\T$, with $\theta \in\R\setminus \Q$, such that ${\lambda ^{\nk}}\rightarrow{1}$ as ${k}\rightarrow{+\infty}$, $(\nk)_{k\ge 0}$ is a rigidity sequence.
\end{example}

This result of \cite{FT} follows directly from Corollary \ref{Cor1}. Indeed, if ${\lambda ^{\nk}}\rightarrow{1}$ with $\lambda =e^{2i\pi \theta }$, $\theta \in\R\setminus\Q$, ${\lambda  ^{p\nk}}\rightarrow{1}$ for every $p\in\Z$. Since $\theta $ is irrational, the set $\{\lambda ^{p}\,;\,p\in\Z\}$ is dense in $\T$, and Corollary \ref{Cor1} applies.

\begin{example}\cite{BDLR}, \cite{EG} \label{Example 2}
  If $(\nk)_{k\ge 0}$ is a strictly increasing sequence of integers such that $\nk|n_{k+1}$ for every $k\ge 0$, $(\nk)_{k\ge 0}$ is a rigidity sequence. 
\end{example}

Indeed, under the assumption of Example \ref{Example 2}, the set $C=\{\lambda \in\T\,;\,{\lambda ^{\nk}}\rightarrow{1}\}$ contains all $\nk$-th roots of $1$,
$k\ge 0$, and is hence dense in $\T$.
\par\smallskip
Corollary \ref{Cor2} shows that Example \ref{Example 2} can be improved into

\begin{example} \label{Example 3}
  Let $(m_{k})_{k\ge 0}$ be a strictly increasing sequence of integers such that $m_{k}|m_{k+1}$ for every $k\ge 0$. Let 
${\psi :\N}\longrightarrow{\N}$ be a strictly increasing function. Order the set 
$\{k'm_{k}\,;\,k\ge 0\:,\; 1\le k'\le \psi (k)\}$ as a strictly increasing sequence $(\nk)_{k\ge 0}$. Then $(\nk)_{k\ge 0}$ is a rigidity sequence.
\end{example}

Indeed, the set $C'=\{\lambda \in\T\,;\,{\lambda ^{k'm_{k}}}\rightarrow{1}\,\textrm{as}\ {k}\rightarrow{+\infty}\,\textrm{uniformly in}\ k'\}$ contains all $m_{k}$-th roots of $1$, and is dense in $\T$. So Corollary \ref{Cor2} applies.
\par\smallskip
For instance, if $(r_{k})_{k\ge 0}$ is any sequence of positive integers, the sequence $(\nk)_{k\ge 0}$ obtained by ordering the set $\{k'2^{k}\,;\,k\ge 0,\;1\le k'\le r_{k} \}$ in a strictly increasing sequence is a rigidity sequence.
This provides new examples of rigidity sequences $(\nk)_{k\ge 0}$ such that ${\frac{n_{k+1}}{\nk}}\rightarrow{1}$ as ${k}\rightarrow{+\infty}$.

\begin{example}\label{Example 4}
 Let $(r_{k})_{k\ge 0}$ be any sequence of positive integers with ${r_{k}}\rightarrow{+\infty}$ as ${k}\rightarrow{+\infty}$. The sequence $(n_l)_{l\ge 0}$ obtained by ordering in a strictly increasing fashion the set $\{j2^{k}\,;\, k\ge 0,\;1\le j\le r_{k}\}$ is a rigidity sequence which satisfies ${\frac{n_{l+1}}{n_l}}\rightarrow{1}$ as ${l}\rightarrow{+\infty}$.
\end{example}

\begin{proof}
 It suffices to show that for every $\varepsilon >0$ and every $l$ sufficiently large there exists $l'>l$ such that $\frac{n_{l'}}{n_{l}}<1+\varepsilon $. 

-- Suppose first that 
 $n_l=j2^{k}$ for some $k\ge 0$ and some $1/\varepsilon <j<r_{k}$. Then taking $n_{l'}=(j+1)2^{k}$, we have $\frac{n_{l'}}{n_{l}}=\frac{j+1}{j}<1+\varepsilon $. 

-- Suppose next that $n_l'=j2^{k}$ for some $k\ge 0$ and some $1\le j\le 1/\varepsilon$. Fix an integer $p$ such that $2^{-p}<\varepsilon $.
 If $l$ is sufficiently large, we have $r_{k-p}> 2^{p}/\varepsilon $. Set $j'=j2^{p}$. Since $j'\le 2^{p}/\varepsilon <r_{k-p}$, the integer $n_{l'}=(j'+1)2^{k-p}$ appears in the sequence $(n_l)_{l\ge 0}$. Also, since $n_{l'}=(j'+1)2^{k-p}>j2^k=n_l$, we have $l'>l$, and 
 \[\frac{n_{l'}}{n_{l}}=\frac{(j'+1)2^{k-p}}{j2^{k}}=\frac{(j'+1)}{j}2^{-p}\le\frac{j+2^{-p}}{j}<1+2^{-p}<1+\varepsilon.\]

-- The last case we have to deal with is when $n_l=r_{k}2^{k}$ for some $k\ge 0$. Let $j'\ge 1$ be such that 
$j'\le r_{k}/2<j'+1$. Then $j'<r_{k+1}$, and if we set $n_{l'}=(j'+1)2^{k+1}$, the integer $n_{l'}$ appears in the sequence $(n_l)_{l\ge 0}$. We have
\[
\frac{n_{l'}}{n_{l}}=\frac{(j'+1)2^{k+1}}{r_{k}2^{k}}=\frac{2(j'+1)}{r_{k}}\le 1+\frac{2}{r_{k}}<1+\varepsilon 
\]
if $k$ is sufficiently large, and this terminates the proof.
\end{proof}

\begin{example} \cite{Aa}  \label{Example Aaronson}
(a) Let $(d_k)_{k\ge 0}$ be a strictly increasing sequence of positive integers of density zero. There exists a strictly increasing sequence of integers $(n_{k})_{k\ge 0}$ which is a rigidity sequence and satisfies $n_k \le d_k$ for every $k\ge 0$.

(b) Let $(d_k)_{k\ge 0}$ be a sequence of real numbers with $d_k\ge k$ for every $k\ge 0$ and $\frac{d_k}{k} \rightarrow +\infty$ as $k\rightarrow +\infty$. There exists a strictly increasing sequence of integers $(n_{k})_{k\ge 0}$ which is a rigidity sequence and satisfies $n_k \le d_k$ for every $k\ge 0$.
\end{example}

This has been proved by Aaronson in \cite[Th. 4]{Aa}; a simpler construction with the weaker conclusion that $n_k \le d_k$ for infinitely many $k$ was given in \cite[Prop. 3.18]{BDLR}. The proof given below uses Corollary \ref{Cor1} and a result of Bugeaud \cite{Bug2}.

\begin{proof}
As the statement (a) is a simple consequence of (b), we only give the proof of (b). Set $g_0=1$ and $g_k = d_k/k$ for every $k\ge 1$. Then $(g_{k})_{k\ge 0}$ is a sequence of reals with $ g_k\ge 1$ for every $k\ge 0$ which tends to infinity (notice that for (a) this holds since $(d_k)_{k\ge 0}$ is a sequence of density zero). Using (a particular case of) \cite[Th. 1]{Bug2}, we obtain that there exists for every fixed irrational number $\theta$ an increasing sequence $(n_{k})_{k\ge 0}$ of positive integers such that $n_k \le kg_k = d_k$ for every $k\ge 1$ and ${\exp(2i\pi\theta)^{n_k}}\rightarrow{1}$. It follows from Example \ref{Example 1} that $(n_{k})_{k\ge 0}$ is a rigidity sequence. 
\end{proof}

\begin{example}  \label{Example new}
Let $(m_k)_{k\ge 0}$ be a strictly
 increasing sequence of positive integers with ${m_{k+1}-m_k}\rightarrow{+\infty}$. There exists a strictly increasing sequence of integers $(n_{k})_{k\ge 0}$ which is a rigidity sequence and satisfies $m_k\le n_k < m_{k+1}$ for every $k\ge 0$.
\end{example}

\begin{proof}
The proof is exactly the same as the preceding one, replacing the result from \cite{Bug2} by \cite[Obs. 1.36]{BerSim}.
\end{proof}

\subsection{Exceptional sets for (almost) uniform distribution}\label{Subsection 5.3}
Let $(n_{k})_{k\ge 0}$ be a strictly increasing sequence of integers, and let $\nu \in \mathcal{M}(\T)$ be a (finite) complex Borel measure on $\T$. We stress that $\nu$ is not necessarily a probability measure. Given $\theta \in \R$, the sequence $(n_{k}\theta )_{k\ge 0}$ is said (\cite{LyonsAnnals}, \cite[p. 53]{KN}) to be \emph{almost uniformly distributed with respect to} $\nu$ 
if there exists a strictly increasing sequence $(N_j)_{j\ge1}$ of positive integers such that
 for every arc $I \subset \T$ whose endpoints are not atoms (mass-points) for $\nu$ one has 
$$ \lim_{j\to+\infty}\frac{1}{N_j} \# \left\{n \le N_j : \exp(2i\pi n_k\theta) \in I\right\} = \nu(I) .$$
The analog of Weyl's criterion states that $(n_{k}\theta )_{k\ge 0}$ is almost uniformly distributed with respect to $\nu$ if and only if 
there exists a strictly increasing sequence $(N_j)_{j\ge1}$ of positive integers such that
$$\lim_{j\to\infty}\frac{1}{N_j} \sum_{k=1}^{N_j}\exp(m2i\pi n_k\theta) \, \textrm{ exists} \quad\textrm{ for every } m\in \Z.$$
In this case, the limit is $\hat{\nu}(m)$. It can also be proved that $(n_{k}\theta )_{k\ge 0}$ is almost uniformly distributed with respect to $\nu$ if and only if 
there exists a strictly increasing sequence $(N_j)_{j\ge1}$ of positive integers such that
\[
{\fl{\dfrac{1}{N_j}\ds\sum_{k=1}^{N_j}f\bigl (e^{2i\pi n_{k}\theta } \bigr) }{\ds\int_{\T}f\,d\mu }}\quad \textrm{as}\quad {\fl{j}{+\infty}}\quad \textrm{for every}\ f\in\mathcal{C}(\T).
\]
We now denote by $W((n_{k})_{k\ge 0},\nu)$, the \emph{exceptional set of almost uniform distribution} of $(n_{k})$ with respect to $\nu$. This is the set of all 
$\theta\in\R$ such that $(n_{k}\theta )_{k\ge 0}$ is not almost uniformly distributed with respect to $\nu$. We will write $U((n_{k})_{k\ge 0},\nu)$ for the exceptional set of (classical) uniform distribution of $(n_{k})$ with respect to $\nu$, which corresponds to the case where $N_j=j$ for every $j\ge 1$. 

\par\smallskip
The size of the exceptional set $U((n_{k})_{k\ge 0},\nu )$ has been studied in  many works, in particular in the case where $\nu $ is the normalized Lebesgue measure on $\T$. In this case, we write it as $U((n_{k})_{k\ge 0})$. If the sequence $(n_{k})_{k\ge 0}$ is lacunary, $U((n_{k})_{k\ge 0})$ is uncountable, and even of Hausdorff dimension $1$ (\cite{ET}, see also \cite{HK}). See also \cite{Poll} and \cite{dM} for a stronger result. On the other hand, it is known (see \cite{Bos1}, \cite{Bou}) that among various natural classes of random sequences of integers, almost all sequences $(n_{k})_{k\ge 0}$ satisfy $U((n_{k})_{k\ge 0})=\Q$. These typical random sequences $(n_{k})_{k\ge 0}$ are sublacunary, i.e.\ satisfy ${n_{k+1}/n_{k}}\rightarrow{1}$ as ${k}\rightarrow{+\infty}$ Nonetheless, examples of sublacunary sequences $(n_{k})_{k\ge 0}$ with $U((n_{k})_{k\ge 0})$ uncountable were constructed in \cite{ET} (see also \cite{Baker}).  Concerning the size of $W((n_{k})_{k\ge 0},\nu)$ we refer for instance to \cite{Piat}, \cite{HK} and \cite{Kah}. See also \cite{Bug} for other references.
\par\smallskip

Our results about the size of $W((n_{k})_{k\ge 0},\nu)$ rely on the following generalization of Proposition \ref{Proposition 0}, which provides a link between the size of the exceptional
 set $W((n_{k})_{k\ge 0},\nu )$ and the modified \ka\ constant of the set $\{n_{k}\,;\,k\ge 0\}$.

 \begin{proposition}\label{Proposition 0 bis}
  Let $(n_{k})_{k\ge 0}$ be a strictly increasing sequence of positive integers with $n_0=1$, and let $\nu \in\mathcal{M}(\T)$ with $\nu\not =\delta_{\{1\}}$. If $W((n_{k})_{k\ge 0},\nu )$ is finite or countable infinite, $Q=\{n_{k}\,;\,k\ge 0\}$ is a \ka\ subset of $\Z$, and $$\Kct(Q)\ge\sqrt{2(1-\Re e\,\wh{\nu }(1))}.$$
 \end{proposition}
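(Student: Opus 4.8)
The plan is to deduce the statement from the characterization of generating Kazhdan subsets of $\Z$ given in Theorem \ref{Th B}. First note that, $W((n_k)_{k\ge 0},\nu)$ being at most countable, it is distinct from $\T$; since the very existence of a $\theta$ for which $(n_k\theta)_{k\ge 0}$ is almost uniformly distributed with respect to $\nu$ forces $\nu$ to be a probability measure (one has $\nu(\T)=1$ from the frequency $m=0$ in the Weyl-type criterion, and $\nu\geq 0$ from the arc version), we get $\nu\in\mathcal{P}(\T)\setminus\{\dpi{1}\}$, hence $\eta:=1-\Re e\,\wh{\nu}(1)>0$. By condition (ii) of Theorem \ref{Th B}, it therefore suffices to show that every continuous probability measure $\mu$ on $\T$ satisfies $\sup_{n\in Q}(1-\Re e\,\wh{\mu}(n))\ge\eta$, equivalently $\inf_{n\in Q}\Re e\,\wh{\mu}(n)\le\Re e\,\wh{\nu}(1)$. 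This yields at once that $Q$ is a Kazhdan set (since $\eta>0$) and, through the relations between the constants in Theorem \ref{Th B}, that $\Kct(Q)\ge\sqrt{2\eta}=\sqrt{2(1-\Re e\,\wh{\nu}(1))}$.

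So let $\mu\in\mathcal{P}(\T)$ be continuous. Since $\mu$ is continuous and $W((n_k)_{k\ge 0},\nu)$ is countable, $\mu\bigl(W((n_k)_{k\ge 0},\nu)\bigr)=0$, so for $\mu$-almost every $\theta$ the sequence $(n_k\theta)_{k\ge 0}$ is almost uniformly distributed with respect to $\nu$; for such $\theta$ there is then, by the Weyl-type criterion, a strictly increasing sequence $(N_j)_{j\ge1}$ (depending on $\theta$) along which $\frac1{N_j}\sum_{k=1}^{N_j}e^{2i\pi n_k\theta}\to\wh{\nu}(1)$, and hence
\[
\frac1{N_j}\sum_{k=1}^{N_j}\cos(2\pi n_k\theta)\ \longrightarrow\ \Re e\,\wh{\nu}(1)\qquad\textrm{as }j\to+\infty .
\]
On the other hand $\Re e\,\wh{\mu}(n)=\int_\T\cos(2\pi n\theta)\,d\mu(\theta)$ for every $n$, so that $\frac1N\sum_{k=1}^N\Re e\,\wh{\mu}(n_k)=\int_\T\bigl(\frac1N\sum_{k=1}^N\cos(2\pi n_k\theta)\bigr)\,d\mu(\theta)$, and $\inf_{n\in Q}\Re e\,\wh{\mu}(n)\le\liminf_{N\to+\infty}\frac1N\sum_{k=1}^N\Re e\,\wh{\mu}(n_k)$. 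Thus the proof is reduced to establishing that $\liminf_{N\to+\infty}\frac1N\sum_{k=1}^N\Re e\,\wh{\mu}(n_k)\le\Re e\,\wh{\nu}(1)$.

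The main obstacle is this last inequality, i.e. transferring the $\mu$-almost everywhere, subsequential, convergence just obtained into a statement about the integrated Cesàro averages. In the particular case where $\nu$ is normalized Lebesgue measure (so $\Re e\,\wh{\nu}(1)=0$), this is Proposition \ref{Proposition 0}, and there the hypothesis gives genuine uniform distribution of $(n_k\theta)_{k\ge0}$ for $\mu$-a.e. $\theta$, that is $\frac1N\sum_{k=1}^Ne^{2i\pi m n_k\theta}\to0$ along the \emph{full} sequence $N$ for $\mu$-a.e. $\theta$ and every $m\neq0$; the dominated convergence theorem then applies directly and gives $\frac1N\sum_{k=1}^N\Re e\,\wh{\mu}(n_k)\to0$. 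For a general $\nu$ the convergence holds only along a subsequence of $N$ that depends on $\theta$, so dominated convergence is unavailable; moreover a naive Fatou-type argument genuinely fails, since it may happen that $\int g_N\,d\mu\ge c$ for all $N$ while $\liminf_N g_N\le b<c$ $\mu$-almost everywhere. To go further one would exploit the full strength of almost uniform distribution — for $\mu$-a.e. $\theta$, the empirical measures $\frac1N\sum_{k=1}^N\dpi{e^{2i\pi n_k\theta}}$ cluster in the weak-$*$ topology at the \emph{fixed} probability measure $\nu$, so that all frequencies $m$ are controlled along one common ($\theta$-dependent) subsequence — together with the slow variation of the Cesàro averages (two consecutive ones differing by at most $2/N$), in order to produce a single subsequence of integers $N$ that is simultaneously ``good'' for a set of $\theta$ of $\mu$-measure bounded away from $0$. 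Making this precise is the technical heart of the argument, and it is carried out along the lines of the proof of \cite[Th.\,7.1]{BG2}.
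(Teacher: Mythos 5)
Your reduction coincides exactly with the paper's: invoke Theorem \ref{Th B}(ii), note that a continuous $\mu$ gives zero mass to the at most countable set $W((n_k)_{k\ge 0},\nu)$, and reduce everything to showing that $\liminf_{N}\frac1N\sum_{k=1}^N\Re e\,\wh{\mu}(n_k)\le\Re e\,\wh{\nu}(1)$ for every continuous $\mu\in\mathcal{P}(\T)$. Up to that point your argument is correct, and your preliminary observation that $\nu$ is forced to be a probability measure, so that $\nu\ne\dpi{1}$ yields $1-\Re e\,\wh{\nu}(1)>0$, is a sound way of using that hypothesis.

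The difficulty is that you stop at precisely the step that constitutes the proof. You correctly diagnose why dominated convergence and Fatou are unavailable (the subsequence $(N_j)$ in the definition of almost uniform distribution depends on $\theta$), you sketch a strategy for circumventing this, and you then defer the execution to \cite[Th.\,7.1]{BG2}. That reference handles only genuine uniform distribution, i.e.\ $N_j=j$, where, as you yourself note, there is nothing to do; it does not contain the argument you need. Your fallback plan is also quantitatively insufficient: a common subsequence that is good only on a set $A$ with $\mu(A)=c<1$ would yield $\sup_{n\in Q}(1-\Re e\,\wh{\mu}(n))\ge c\,(1-\Re e\,\wh{\nu}(1))$ and hence a weaker constant than the one claimed. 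So, as written, the proposal does not prove the proposition. For comparison, the paper's own proof is three lines: from $\sup_k(1-\Re e\,\wh{\mu}(n_k))<\gamma$ it deduces $1-\Re e\int_\T\bigl(\frac1N\sum_{k=1}^N\lambda^{n_k}\bigr)\,d\mu(\lambda)<\gamma$ for every $N$, then asserts the existence of a \emph{single} increasing sequence $(N_j)$ along which $\frac1{N_j}\sum_{k=1}^{N_j}\lambda^{n_k}\to\wh{\nu}(1)$ for every $\lambda$ outside a countable set, and concludes by passing to the limit under the integral, getting $1-\Re e\,\wh{\nu}(1)\le\gamma$, a contradiction. In other words the paper takes the subsequence to be independent of $\theta$ off the exceptional set; your concern about the $\theta$-dependence of $(N_j)$ is a fair reading of the definition as stated and is exactly the point the paper's proof passes over in silence, but identifying a difficulty is not the same as resolving it, and your proposal leaves it unresolved.
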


\begin{proof}
 Fix $\gamma \in(0, 1-\Re e\,\wh{\nu }(1))$, and let $\mu$ be a probability measure on $\T$ such that $\sup_{k\ge 0}(1-\Re e\,\wh{\mu }(n_{k}))<\gamma $. Then
 \begin{align*}
  1-\Re e\,\int_{\T}\Bigl (\dfrac{1}{N}\sum_{k=1}^{N}\lambda ^{n_{k}} \Bigr)d\mu (\lambda )<\gamma \quad\textrm{ for every } N\ge 1. 
 \end{align*}
Suppose that the measure $\mu$ is continuous.
Since there exists a strictly increasing sequence $(N_j)_{j\ge 1}$ of integers such that 

\[{\dfrac{1}{N_j}\ds\sum_{k=1}^{N_j}\lambda ^{n_{k}}}\rightarrow{\wh{\nu }(1)}\quad\textrm{ as } {j}\rightarrow{+\infty} \quad\textrm{ for every } \lambda \in\T\setminus C,\] where $C$ is a finite or countable infinite subset of $\T$, we have $1-\Re e\,\wh{\nu }(1)\le\gamma$, which contradicts our initial assumption. So $\mu$ has a discrete part. It then follows from Theorem \ref{Th B}  that the modified \ka\ constant of $Q$ is at least $\sqrt{2(1-\Re e\,\wh{\nu }(1))}$.
\end{proof}

The following result provides an example of a nonlacunary semigroup $(n_{k})_{k\ge 0}$ whose associated exceptional sets $W((n_{k})_{k\ge 0},\nu )$ with respect to $\nu$ are uncountable for a large class of measures $\nu \in\mathcal{M}(\T)$.

\begin{theorem}\label{Th6}
 Denote by $(n_{k})_{k\ge 0}$ the sequence obtained by ordering the Furstenberg set 
 $F=\{2^{k}3^{k'}\,;\,k,k'\ge 0\}$ in a strictly increasing fashion. For every measure $\nu \in\mathcal{M}(\T)$ such that $\Re e\,\wh{\nu }(1)<1/2$, the set 
 $W((n_{k})_{k\ge 0},\nu )$ is uncountable.
\end{theorem}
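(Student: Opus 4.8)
The plan is to derive Theorem~\ref{Th6} by contraposition from Proposition~\ref{Proposition 0 bis}, using as the crucial external input the bound $\Kct(F)\le 1$ furnished by Theorem~\ref{Th3bis}.

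First I would check that the hypotheses of Proposition~\ref{Proposition 0 bis} are met. Ordering $F=\{2^{k}3^{k'}\,;\,k,k'\ge 0\}$ in an increasing fashion, the first term is $n_{0}=2^{0}3^{0}=1$, as required. Next, the assumption $\Re e\,\wh{\nu}(1)<1/2$ rules out the case $\nu=\delta_{\{1\}}$, for which $\wh{\nu}(1)=1$; hence $\nu\neq\delta_{\{1\}}$ and Proposition~\ref{Proposition 0 bis} is applicable.

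Now suppose, towards a contradiction, that $W((n_{k})_{k\ge 0},\nu)$ is finite or countably infinite. Then Proposition~\ref{Proposition 0 bis} gives that $Q=\{n_{k}\,;\,k\ge 0\}=F$ is a \ka\ subset of $\Z$ with
\[
\Kct(F)\ge\sqrt{2\bigl(1-\Re e\,\wh{\nu}(1)\bigr)}.
\]
Since $\Re e\,\wh{\nu}(1)<1/2$, the right-hand side exceeds $\sqrt{2(1-1/2)}=1$, so $\Kct(F)>1$, contradicting Theorem~\ref{Th3bis}, which asserts $\Kct(F)\le 1$. Therefore $W((n_{k})_{k\ge 0},\nu)$ must be uncountable.

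Once Proposition~\ref{Proposition 0 bis} and Theorem~\ref{Th3bis} are in hand, this final step is essentially free, and I do not expect any genuine obstacle in the deduction itself. The substance of the argument lies entirely upstream: in (i) the construction behind Theorem~\ref{Th3bis} of continuous probability measures on $\T$ with Fourier coefficients bounded below by any prescribed $\delta<1/2$ along $F$ (which forces $\Kct(F)\le 1$ via Theorem~\ref{Th B}), and in (ii) the proof of Proposition~\ref{Proposition 0 bis}, which links the countability of the exceptional set $W$ to the modified \ka\ constant through the Weyl-type criterion for almost uniform distribution together with the characterization of generating \ka\ subsets of $\Z$.
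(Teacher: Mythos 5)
Your proposal is correct and follows exactly the paper's own argument: assume $W((n_{k})_{k\ge 0},\nu)$ is at most countable, apply Proposition~\ref{Proposition 0 bis} to get $\Kct(F)\ge\sqrt{2(1-\Re e\,\wh{\nu}(1))}>1$, and contradict the bound $\Kct(F)\le 1$ from Theorem~\ref{Th3bis}. The preliminary checks ($n_{0}=1$ and $\nu\neq\delta_{\{1\}}$) are appropriate and implicit in the paper's version.
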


\begin{proof}[Proof of Theorem \ref{Th6}]
Fix $\nu \in\mathcal{M}(\T)$, and suppose that $U((n_{k})_{k\ge 0},\nu )$ is at most countable. Since $\Kct(F)\le 1$ by Theorem \ref{Th3bis}, it follows from Proposition \ref{Proposition 0 bis}  that $\sqrt{2(1-\Re e\,\wh{\nu }(1))}\le 1$, i.e.\ that $\Re e\,\wh{\nu }(1)\ge 1/2$. This proves Theorem \ref{Th6}.
\end{proof}

\end{document}